\definecolor{myred}{RGB}{183,18,52}
\definecolor{myyellow}{RGB}{254,213,1}
\definecolor{myblue}{RGB}{0,80,198}
\definecolor{mygreen}{RGB}{0,155,72}
\newcommand{\mS}{\mathcal{S}}
\newcommand{\cod}{\mbox{cod}}
\newcommand{\Ff}{{\mathcal F}}
\newcommand{\eps}{{\epsilon}}
\newcommand{\mT}{\mathcal{T}}
\newcommand{\mU}{\mathcal{U}}
\newcommand{\mC}{\mathcal{C}}
\newcommand{\Diff}{{\rm Diff}}
\newcommand{\tr}{^\top}
\newcommand{\mR}{\mathcal{R}}
\newcommand{\mA}{\mathcal{A}}
\newcommand{\mF}{\mathcal{F}}
\newcommand{\mE}{\mathcal{E}}
\newcommand{\mD}{\mathcal{D}}
\newcommand{\ZZ}{\mathbb{Z}}
\newcommand{\CC}{\mathbb{C}}
\newcommand{\RR}{\mathbb{R}}
\newcommand{\Dd}{{\mathcal D}}
\newcommand{\w}{\omega}
\newcommand{\om}{\omega}
\newcommand{\red}{\textcolor{red}}
\newcommand{\Fol}{{\rm Fol}}
\newtheorem{thm}{Theorem}[section]
\newtheorem{dfn}[thm]{Definition}
\newtheorem{cor}[thm]{Corollary}
\newtheorem{lma}[thm]{Lemma}
\newtheorem{prp}[thm]{Proposition}
\newtheorem{rmk}[thm]{Remark}
\begin{document}
\title{Chambers in the symplectic cone and stability of symplectomorphism group for ruled surface  }
\author{ Olguta Buse and Jun Li }
\address{Department  of Mathematics\\  University of Michigan\\ Ann Arbor, MI 48109}
\email{lijungeo@umich.edu}

\address{Department  of Mathematics\\  IUPUI\\ Indianapolis, IN 46032}
\email{obuse@iupui.edu}

\date{ \today}
\begin{abstract}
We continue the work of \cite{BL1} to prove that for any non-minimal ruled surface $(M,\w)$, the stability under symplectic deformations of $\pi_0, \pi_1$ of $Symp(M,\w)$ is guided by embedded $J$-holomorphic curves.   Further, we prove that for any fixed sizes blowups, when the area ratio $\mu$ between the section and fiber goes to infinity, there is a topological colimit of $Symp(M,\w_{\mu}).$     Moreover, when the blowup sizes are all equal to half the area of the fiber class,  similar to \cite{BL1}, we give a topological model of the colimit which induces non-trivial symplectic mapping classes in $Symp(M,\w) \cap \Diff_0(M).$ These mapping classes are not Dehn twists along Lagrangian spheres.
 \end{abstract}   

\maketitle
\setcounter{tocdepth}{2}
\tableofcontents
\section{Introduction}\label{Intro}

We study some topological aspects of symplectomorphism groups along the line of \cite{Abr98, AM99, McDacs, LP04, ALP, AGK09, Buse11, LL16, LLW16, ALLP}, etc. We will address the topological behavior of the symplectomorphism groups as the form $\omega$  varies within the symplectic cone.  This is a follow-up note of \cite{BL1}, which formulates the symplectic stability and symplectic isotopy conjectures for arbitrary non-minimal ruled surfaces, and establishes them for the one-point blowups of minimal irrational ruled surfaces.   Recall that the stability conjecture informally states that the reduced symplectic cone has a chamber partition such that symplectomorphism groups have homotopy type invariant within the chambers.

As it will be explained in Section \ref{stabproof}, it is not straightforward to extend the result of \cite{BL1} to the multi-point setting.  The key is to control the degeneration of embedded J-holomorphic curves in certain homology classes, so that the classes of their simple representatives form a basis for the symplectic cone.  This is the reason that we only partially establish this conjecture for multi-point blowups.

Let $M_g=\Sigma_g \times S^2$. We will focus on the stability of $\pi_0, \pi_1$ of $Symp(M_g\# \overline{n \CC P^2}, \omega_t)$ while the symplectic form varies in a family such that $\w_t(\Sigma_g) \to \infty$ and  symplectic areas remain the same.

 By McDuff's classification results \cite{McD94}, any symplectic form on $M_g$ is diffeomorphic to $\mu \sigma_{\Sigma_g} \oplus \sigma_{S^2}$ for some $ \mu >0$, up to diffeomorphism and normalization.  This classification result  also holds in the  blowups $M_g\# \overline{n \CC P^2}$ \cite{LLiu2}: if one picks an $\w$ on $M_g\# \overline{n \CC P^2}$, then we normalize $\w$ to have  areas $(\mu, 1, c_1, \cdots, c_n)$ on the homology classes $B, F, E_1, \cdots, E_n$. The vectors  $u=(\mu, 1, c_1, \cdots, c_n)$ determine all possible symplectic  cohomology classes and belong to a convex region $\Delta^{n+1}$ in $\RR^{n+1}$, whose boundary walls are $n$-dimensional convex regions given by linear equations. We will be concerned with symplectic deformations inside such $\Delta^{n+1}$ for the $n$-point blowups.




The topology of symplectomorphism groups is tied with the space of almost complex structures. The core of our paper is to understand the space of almost complex structures. 

Throughout  the paper, we use the notation $G^g_{u,n}$ for  $Symp (M_g\# \overline{n \CC P^2} ,\omega)\cap  
\Diff_0( M_g\# \overline{n \CC P^2})$, where $[\omega]=u$ is a reduced cohomology class (see Section \ref{cone} for the definition)\footnote{Even though $u$ is a just reduced cohomology class rather than an isotopy class, there is no ambiguity as explained in Section \ref{tcinf}.} and $\Diff_0(  M_g\# \overline{n \CC P^2})$ is the identity component of the group of diffeomorphisms.

In particular,  Lemma \ref{incup} and Proposition \ref{inflation} allow us to prove the following:

 \begin{thm} \label{stab01intro}
  Suppose $\mu, \mu'>max\{g, n\}$ for $u= (\mu, 1, c_1,\cdots, c_n)$, $u'= (\mu', 1, c_1,\cdots, c_n)$.  Then the groups  $\pi_0$ and $\pi_1$ of $G^g_{u,n}   $ and $   G^g_{u',n}  $ are  the same.
\end{thm}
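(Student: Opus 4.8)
The plan is to connect $u$ and $u'$ by a path of reduced classes lying entirely in the region $\{\mu>\max\{g,n\}\}$, and then to show that along this path the stratified homotopy type of the space of compatible almost complex structures---which governs $\pi_0$ and $\pi_1$ of $G^g_{u,n}$---does not change. Since $u$ and $u'$ carry the same areas on $F$ and on each $E_i$ and differ only in the area of the base class $B$, we have $u'=u+(\mu'-\mu)\,\mathrm{PD}(F)$; assuming without loss of generality $\mu<\mu'$, the segment $u_s=u+s(\mu'-\mu)\,\mathrm{PD}(F)$ with $s\in[0,1]$ consists of reduced classes, each satisfying $\mu_s=\mu+s(\mu'-\mu)>\max\{g,n\}$.

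First I would build the deformation by inflation. For every $\omega$-compatible $J$ the fiber class $F$ is represented by embedded $J$-holomorphic curves, and $F\cdot F=0$, so inflation along a fiber is unobstructed for all $t\ge 0$; Proposition \ref{inflation} then produces a family of symplectic forms whose cohomology classes trace out the segment $u_s$ and which are compatible with a coherent family of almost complex structures. Packaging these together yields a space $\mathcal{J}\to[0,1]$ with fiber $\mathcal{J}_{u_s}$ over $s$; as each fiber is contractible, the total space deformation retracts onto any single fiber, and this is the mechanism by which $\mathcal{J}_u$ and $\mathcal{J}_{u'}$ are compared.

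The heart of the argument is to show that the fibers $\mathcal{J}_{u_s}$ carry the same stratification for all $s$. These spaces are stratified according to the finite collections of embedded $J$-holomorphic curves in classes of negative self-intersection, and the homotopy type of $G^g_{u_s,n}$ is extracted from this stratification through the action of the symplectomorphism group on $\mathcal{J}_{u_s}$ and the fibration-type long exact sequences it induces. The decisive claim is that the set of negative classes admitting embedded $J$-holomorphic representatives is independent of $s$: here I would invoke Lemma \ref{incup} to enumerate the candidate classes and, using $\mu_s>\max\{g,n\}$, to verify that no such class gains or loses an embedded $J$-holomorphic representative as $s$ varies, so that the stratification is a trivial family over $[0,1]$.

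I expect this last step to be the main obstacle, and it is precisely the control of degenerations of embedded $J$-holomorphic curves flagged in the introduction as the difficulty in the multi-point case: one must rule out the appearance or disappearance, along the segment, of an embedded curve in a negative class, and confirm that the bound $\mu_s>\max\{g,n\}$ is exactly what keeps the whole path inside a single chamber of the reduced cone. Granting constancy of the stratification, the conclusion is formal: the homotopy equivalence $\mathcal{J}_u\simeq\mathcal{J}_{u'}$ is compatible stratum by stratum with the $Symp$-actions, so the associated long exact sequences are identified in low degrees, yielding $\pi_0(G^g_{u,n})\cong\pi_0(G^g_{u',n})$ and $\pi_1(G^g_{u,n})\cong\pi_1(G^g_{u',n})$.
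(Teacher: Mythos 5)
Your overall architecture (a path of classes obtained by changing only $\mu$, inflation to compare the spaces of almost complex structures, and the fibration $G^g_{u,n}\to\Diff_0\to\mA_u$ to transfer the conclusion to $\pi_0,\pi_1$) matches the paper, but there are two genuine gaps and one false assertion. First, inflation along the fiber class $F$ only moves you \emph{up} in $\mu$: Lemma \ref{incup} gives the inclusion $\mA_{u,\mC}\subset\mA_{u',\mC}$ for $\mu'>\mu$ and nothing in the reverse direction. To obtain an isomorphism rather than a one-way map you must also decrease $\mu$, and that is where the real work lies: for every tame $J$ Zhang's Proposition \ref{smoothsection} supplies an embedded section-type curve in a class $B+xF-\sum E_i$, and one inflates along it \emph{together with} the exceptional curves $E_i$, $F-E_j$ (or, on the mildly degenerate codimension-$2$ strata, the alternating $\{C,D\}$ inflation of Lemma \ref{cod2inf}) to land back at smaller $\mu$ with the $c_i$ unchanged after normalization; this is Proposition \ref{inflation}, and it is precisely here that the hypotheses $\mu>g$ and $\mu>n$ (via Lemma \ref{type2}) are used. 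Your proposal contains no mechanism for this downward direction, so it proves at best that the maps $G^g_{u,n}\to G^g_{u',n}$ exist for $\mu<\mu'$, not that they are isomorphisms on $\pi_0,\pi_1$.

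Second, your ``decisive claim'' that the whole stratification is a trivial family over $[0,1]$ is both unproved and, by the paper's own account, not available: the high-codimension part $\mA^{high}_u$ is only known to satisfy $\mA^{high}_u\subset\mA^{high}_{u'}$, and its variation is explicitly identified as the obstruction to the full stability conjecture. The argument that actually closes the proof is weaker and suffices: only $\mA^{top}_u$ and $\mA^2_u$ are shown to be constant (Corollary \ref{strconst}), and since the remaining locus has codimension at least $4$, the inclusion $\mA_u\hookrightarrow\mA_{u'}$ induces isomorphisms on $\pi_1$ and $\pi_2$, hence on $\pi_0$ and $\pi_1$ of the fibers $G^g_{u,n}$. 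Finally, the statement that ``each fiber $\mathcal J_{u_s}$ is contractible'' is incorrect and would trivialize the problem: $\mA_{u}$ is homotopy equivalent to $\Diff_0(M)/G^g_{u,n}$ (it is the space of structures compatible with \emph{some} form isotopic to a given one, not with a fixed form), and its nontrivial topology is exactly what the theorem measures.
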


\begin{rmk}
The main Theorem can be stated in a larger generality if we fully introduce the simplicial structure of the reduced symplectic cone. Basically, the stated stability holds true for any family of $u= (\mu, 1, c_1,\cdots, c_n)$ as long as these vectors do not escape to the simplicial walls.

This is because on the boundaries of the symplectic cone chambers (see Figure \ref{rule2} for an illustration), some symplectic curves will disappear as their area becomes trivial, and they may change the connectedness and codimension 1 cycles in  $G^g_{u,n}.$

Nevertheless, the way we state Theorem \ref{stab01intro} is a weaker but convenient version,  and it is sufficient for Proposition \ref{tlimitintro}.
\end{rmk}

 We remark that Lemma \ref{inflation} grants us that there is a topological colimit on horizontal lines inside the reduced symplectic cone as  $\mu \to \infty.$

 The absence of certain badly-behaved almost complex structures strata allow us to establish the full stability conjecture in the particular case when the blow-ups are all equal to half of the area of the fiber $[S^2]$:

 \begin{thm} \label{stabintro}
The homotopy type of $G^g_{\mu,n}$ is constant for $\frac{k}{2} < \mu \le \frac{k+1}{2},$ for
any integer $k \ge 2g$. Moreover as $\mu$ passes the half integer $\frac{k+1}{2}$, all the groups
$\pi_i, i = 0,\cdots, 2k + 2g - 1$ do not change.
\end{thm}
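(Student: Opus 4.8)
The plan is to read off the homotopy type of $G^g_{\mu,n}$ from the stratification of the contractible space $\mathcal{J}_\omega$ of $\omega$-compatible almost complex structures, the strata being indexed by which ``negative'' homology classes carry embedded $J$-holomorphic representatives (and the configuration in which these appear). The classes to watch are the section-type classes $A = B - mF - \sum_{i\in S}E_i$, whose areas are $\omega(B - mF - \sum_{i\in S} E_i) = \mu - m - \tfrac{1}{2}|S|$; since the blow-up sizes are $c_i=\tfrac{1}{2}$, these vanish exactly at the half-integers, which is why the walls sit at $\tfrac{k+1}{2}$. Accordingly the proof splits into two parts: (A) constancy of the full homotopy type on each interval $(\tfrac{k}{2},\tfrac{k+1}{2}]$, where the stratification is combinatorially constant; and (B) a connectivity estimate across the wall $\mu = \tfrac{k+1}{2}$, where one lowest-codimension stratum is created or destroyed.

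For part (A) I would first note that on $(\tfrac{k}{2},\tfrac{k+1}{2})$ no class $B - mF - \sum_{i\in S}E_i$ changes the sign of its area, so the set of $J$-effective negative classes, hence the entire stratification of $\mathcal{J}_\omega$, is independent of $\mu$. I would then upgrade this to a genuine homotopy equivalence $G^g_{\mu,n}\simeq G^g_{\mu',n}$ using the inflation and isotopy machinery of Proposition~\ref{inflation} and Lemma~\ref{incup}: for $\mu,\mu'$ in the same chamber these produce a $Symp$-equivariant deformation between $\mathcal{J}_{\omega_\mu}$ and $\mathcal{J}_{\omega_{\mu'}}$ that preserves the strata, and which descends to an equivalence of the groups. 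The special value $c_i=\tfrac{1}{2}$ enters precisely here: it removes the badly-behaved strata flagged in the remark after Theorem~\ref{stab01intro}, and with only well-behaved strata present the $\pi_0,\pi_1$-statement of Theorem~\ref{stab01intro} promotes to an equivalence of the full homotopy type.

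For part (B) I would pin down which stratum is created at $\mu=\tfrac{k+1}{2}$. The classes whose area vanishes there are exactly $A=B-mF-\sum_{i\in S}E_i$ with $2m+|S|=k+1$. By adjunction each such $A$ is represented by an embedded curve of genus $g$, since $h = 1 + \tfrac{1}{2}(A^2 - c_1\cdot A) = g$, and a short computation gives $A^2 = -(k+1)$ and $c_1\cdot A = 1-2g-k$. The expected real dimension of the moduli of such genus-$g$ curves is $2(c_1\cdot A) - 2 + 2g$, so the stratum of those $J$ admitting a curve in class $A$ has real codimension $-(2(c_1\cdot A)-2+2g) = 2k+2g$ in $\mathcal{J}_\omega$. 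Because the homotopy type of $G^g_{\mu,n}$ is recorded by the strata of small codimension, a stratum first appearing in codimension $2k+2g$ cannot influence $\pi_i$ for $i\le 2k+2g-1$, which is the claimed range. The hypothesis $k\ge 2g$ forces $\tfrac{k+1}{2}>g$, placing $\mu$ in the stable range of Theorem~\ref{stab01intro}, and guarantees that this section-type stratum is the one of smallest codimension appearing at the wall.

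The hard part will be the degeneration analysis underpinning both parts: I must rule out that any \emph{other} $J$-holomorphic configuration at the wall --- reducible or nodal curves, multiply-covered components, or spheres splitting off along fibers --- produces a stratum of codimension smaller than $2k+2g$, and that within each open chamber no such configuration destabilizes the combinatorial type of the stratification. This is exactly the control of degenerations of embedded $J$-holomorphic curves emphasized in the introduction, and it is where positivity of intersections together with the rigidifying effect of the blow-up size $c_i=\tfrac{1}{2}$ must do the essential work. A secondary but necessary technical point is to make the passage from the codimension of the new stratum to the precise connectivity bound $2k+2g-1$ rigorous, i.e.\ to set up the fibration relating $\pi_i(G^g_{\mu,n})$ to the stratified homotopy type of $\mathcal{J}_\omega$ with the correct indexing.
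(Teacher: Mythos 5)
Your proposal is correct and follows essentially the same route as the paper: the walls at half-integers come from the section-type classes $B-mF-\sum_{i\in S}E_i$, the special value $c_i=\tfrac12$ rules out degenerations of exceptional curves so that these section classes account for all positive-codimension strata, within-chamber constancy comes from the two-sided inclusions produced by inflation (Proposition~\ref{inflation}, Lemma~\ref{incup}, Corollary~\ref{strconst}), and the cross-wall range is read off from the codimension $2(-A\cdot A-1+g)=2k+2g$ of the stratum created at $\mu=\tfrac{k+1}{2}$. Your adjunction and index computations agree with the paper's (Definition~\ref{sw} and equation~\eqref{indA}), and the ``hard part'' you flag is exactly what Lemmas~\ref{estable2} and~\ref{strsep} together with the area argument of Section~\ref{1/2} are there to handle.
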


In Chapter \ref{s:out},  we proceed to describe topologically the colimit $G^g_{\infty,n}$ of the symplectomorphisms groups in the equal size $\frac12$ blow-ups.
We establish the following theorem on the disconnectedness of this topological model denoted by $\mD^g_n$.  For a more detailed description of $\mD^g_n$ and $G^g_{\infty,n}$, see Definition \ref{fibergp}.

\begin{prp}\label{tlimitintro}
  Take $M_g\# \overline{n \CC P^2}$ with forms in classes $(\mu,1, \frac12,\cdots,\frac12)$. Then there is a smooth topological model group $\Dd^g_n$ so that:
\begin{enumerate}
\item $\Dd^g_n$ is weakly homotopic to the topological colimit  $G^g_{\infty,n}$ obtained as $\mu$ goes to $\infty$.

\item The group $\Dd^g_n$ is disconnected when $g\ge 2$.

\item When $\mu\to \infty$, for $i=0,1$, $\pi_i(G^g_{u,n})=   \pi_i(G^g_{\infty,n})$  and hence the groups $G^g_{u,n}$ are disconnected for $g \geq 2$.
\end{enumerate}

\end{prp}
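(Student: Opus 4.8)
The plan is to treat the three assertions in sequence. Throughout, $\Dd^g_n$ from Definition~\ref{fibergp} is read as the group of fiberwise symplectomorphisms of the ruling $M_g\#\overline{n \CC P^2}\to\Sigma_g$ lying in the identity component of the diffeomorphism group: concretely a space of sections over $\Sigma_g$ of a bundle whose fiber is the symplectomorphism group of the generic fiber $S^2$ away from the $n$ blow-up points, and of the nodal fiber $E_i\cup(F-E_i)$ --- two spheres of equal area $\tfrac12$ --- over them. The backbone is the stability of Theorem~\ref{stab01intro} and Theorem~\ref{stabintro}, together with the colimit of Lemma~\ref{inflation}.

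\textbf{Assertion (1).} By Lemma~\ref{inflation}, $G^g_{\infty,n}$ is the homotopy direct limit of $\{G^g_{\mu,n}\}$ along the ray $\mu\to\infty$. By Theorem~\ref{stabintro} the homotopy type is constant on each interval $(\tfrac k2,\tfrac{k+1}2]$ and, as $k\ge 2g$ grows, the groups $\pi_i$ stabilize in the range $i\le 2k+2g-1$; hence for every fixed $i$ one has $\pi_i(G^g_{\infty,n})=\pi_i(G^g_{\mu,n})$ for all large $\mu$. It remains to identify this stable value with $\pi_i(\Dd^g_n)$. Here the inflation underlying Lemma~\ref{inflation} shows that the subspace $\mathcal J_\mu^{reg}$ of $\w_\mu$-compatible almost complex structures making the ruling together with the spheres $E_i,\,F-E_i$ holomorphic is the complement of strata whose codimension grows with $\mu$, so $\mathcal J_\mu^{reg}$ becomes more and more highly connected. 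Applying the orbit--stabilizer fibration $\mathrm{Stab}(J)\to G^g_{\mu,n}\to\mathcal J_\mu^{reg}$ and identifying $\mathrm{Stab}(J)$ with the fiberwise group $\Dd^g_n$, the inclusion induces an isomorphism $\pi_i(\Dd^g_n)\to\pi_i(G^g_{\mu,n})$ in a range growing with $\mu$. Passing to the colimit gives the weak homotopy equivalence $\Dd^g_n\simeq G^g_{\infty,n}$.

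\textbf{Assertion (2), and the main obstacle.} The crux is $\pi_0(\Dd^g_n)\neq 0$ for $g\ge 2$. As in \cite{BL1}, the equal-area normalization makes the fiber symplectomorphism group richer than $SO(3)$: near a nodal fiber $E_i\cup(F-E_i)$ there is a loop of fiberwise symplectomorphisms that is smoothly contractible but represents a class in $\ker\!\big(\pi_1(Symp_{\mathrm{fib}})\to\pi_1(\Diff_{\mathrm{fib}})\big)$, and this kernel is nonzero precisely because of the size $\tfrac12$. Winding such a loop around the $1$-cycles of $\Sigma_g$, obstruction theory for sections detects classes in $\pi_0(\Dd^g_n)$ valued in $H^1(\Sigma_g;\,\cdot\,)$ with coefficients in this kernel; by construction these lie in $\Diff_0$ yet are symplectically nontrivial, and I would separate a concrete such section from the identity by an invariant insensitive to smooth isotopy (for instance its action on the fiberwise configuration of exceptional spheres, or a flux-type quantity along the base cycles). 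The delicate point --- and where $g\ge 2$ is forced --- is to show that these candidate classes survive once one is allowed to move through all of $\Dd^g_n$ rather than only through fiberwise loops. I expect that for $g\le 1$ the extra ambient symmetries of the base (the $SO(3)$ of $S^2$ when $g=0$, the translations of $T^2$ when $g=1$) unwind the candidate class back to the identity, while for $g\ge 2$ no such symmetry exists; making this precise, and checking that the surviving classes are not Dehn twists along Lagrangian spheres, is the technical heart of the argument.

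\textbf{Assertion (3).} For $i=0,1$, Theorem~\ref{stab01intro} (equivalently the degree-$\le 1$ part of Theorem~\ref{stabintro}) gives $\pi_i(G^g_{u,n})=\pi_i(G^g_{\infty,n})$ as soon as $\mu>\max\{g,n\}$, since $\pi_i$ commutes with the filtered colimit along isomorphisms. Combined with (1) and (2), this yields $\pi_0(G^g_{u,n})=\pi_0(\Dd^g_n)\neq 0$ for $g\ge 2$, so the groups $G^g_{u,n}$ are disconnected, as claimed.
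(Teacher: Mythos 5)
Your treatment of assertion (1) rests on a fibration that does not exist. The group $G^g_{\mu,n}$ does not act transitively on the space of compatible almost complex structures, and the stabilizer of a point $J$ under the pushforward action of the symplectomorphism group is the finite-dimensional group of $J$-holomorphic symplectomorphisms, not the infinite-dimensional leaf-preserving group $\Dd^g_n$; so the sequence $\mathrm{Stab}(J)\to G^g_{\mu,n}\to\mathcal{J}^{reg}_\mu$ cannot be used to identify $\pi_i(\Dd^g_n)$ with $\pi_i(G^g_{\mu,n})$. The paper's route is different: it introduces the space $\Fol_0$ of $n$-singular foliations (Definition \ref{singfol}), shows that the colimit $\mA_\infty$ is weakly homotopy equivalent to $\Fol_0$ (the map sending $J$ to its foliation by $J$-spheres is a fibration with contractible fibers), and then uses the transitive action of the full group $\Diff_0(M_g\# n\overline{\CC P^2})$ on $\Fol_0$ (Lemma \ref{tranfol}), whose stabilizer is precisely $\Dd^g_n$. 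Comparing the resulting fibration $\Dd^g_n\to\Diff_0\to\Fol_0$ with the Kronheimer--McDuff fibration $G^g_{\infty,n}\to\Diff_0\to\mA_\infty$ yields the weak equivalence of the two homotopy fibers. Your ``increasing connectivity of $\mathcal{J}^{reg}_\mu$'' heuristic plays no role in the actual argument; what is used is stability along the line plus the foliation model at infinity.

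For assertion (2) you have not given a proof: you describe a candidate mechanism (a fiberwise loop near a nodal fiber, wound around base cycles, detected by flux or by the configuration of exceptional spheres) and then explicitly defer ``the technical heart of the argument.'' The paper's construction is concrete and of a different nature. One takes a homologically nontrivial loop in $Conf(\Sigma_g,n)$, realizes it by a path $\alpha(t)$ of diffeomorphisms of $\Sigma_g$ pushing the blow-up points, lifts $\alpha(t)\times\mathrm{id}$ to a loop of complex structures on $M_g\# n\overline{\CC P^2}$ by blowing up at the moving points, hence to a loop $\mF_t$ of singular foliations; transitivity (Lemma \ref{tranfol}) produces $\phi_t\in\Diff_0$ with $\phi_t\circ\mF_0=\mF_t$, and $\phi_{2\pi}$ lies in $\Dd^g_n$ and is visibly smoothly isotopic to the identity. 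If it were isotopic to the identity \emph{inside} $\Dd^g_n$, the projection in Definition \ref{fibergp} would yield an isotopy in $\Diff(\Sigma_g,x_1,\dots,x_n)$ from the point-pushing map to the identity, contradicting the injectivity of $\pi_1(Conf(\Sigma_g,n))\to\pi_0\Diff(\Sigma_g,x_1,\dots,x_n)$ in the Birman exact sequence --- which is exactly where $g\ge 2$ enters (for $g\le 1$ the nontrivial $\pi_1$ of $\Diff_0(\Sigma_g)$ kills part of $\pi_1(Conf)$, as your heuristic correctly guesses but does not prove). Your assertion (3) is fine and matches the paper, being immediate from Theorem \ref{stab01intro}; but without a corrected (1) and a completed (2) the proposition is not established.
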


Notice that the above theorem extends the results of \cite{BL1} from the one-point blowup of a minimal ruled surface to arbitrary points blowups.  It remains unknown what the full group $\pi_0 \mD^g_n$ is, and in particular whether the disconnectivity in Proposition \ref{tlimitintro} holds for other symplectic forms in horizontal lines in the cone.  We hope to explore these questions in future works.




On the line of equal size $1/2$, the rank of $\pi_1(G^g_{\mu,n})$ is positive (indeed at least $n$)  for an $n$-fold blowup of $M_g$.   On the other hand, by \cite[Corollary 3.6]{HK15}, if the number of blowups is large ($n \ge \mu/2$), there are points on this line that do admit Hamiltonian circle actions.  Note that this never appears in the minimal cases or their one-point blowups since these cases all admit Hamiltonian circle actions.

As a corollary, we find that 

\begin{cor}
 There are new elements in $\pi_1(G^g_{u,n})$ for a symplectic form without any Hamiltonian circle actions.
\end{cor}

Notice that previously those kind of elements were discussed in \cite{Kedra} for general type surfaces and \cite{ABP19} for certain forms on rational 4-manifolds.

\noindent\textbf{Acknowledgements.} We are grateful to  Richard Hind, T-J Li, Dusa McDuff, Weiwei Wu, Michael Usher and  Weiyi Zhang for helpful conversations.

\section{Preliminaries}\label{cone}
This section covers the preliminary results needed to develop the proof strategies. These results have been largely described in \cite{BL1}. We include them here to make the current manuscript self-contained.

\subsection{The symplectic cone}

By \cite{LL01}, if $(M,\w)$ is a closed  symplectic 4-manifold with $b^+= 1$, the symplectic canonical class is unique once we fix $u=[\w]$, and we denote it by $K_u$.

Let $\mE$ be the set of exceptional sphere classes, and  $\mC_M \subset H^2(M,\RR)$ denotes the symplectic cone, the collection of symplectic form classes.

In Theorem 4 of \cite{LL01}, Li-Liu showed that if $M$ is a closed, oriented 4-manifold with $b^+= 1$
and if the symplectic cone  $\mC_M$ is nonempty, then
$$\mC_M= \{u\in P |0 < u \cdot E \quad \text{for all} \quad E \in \mE \},$$
where $P\subset H^2(M,\RR) $ is the cone of positive form classes.

\begin{dfn}
 For a cohomology class $\w$ on $M_g\# \overline{n \CC P^2}$,  we normalize $\w$ to have  areas $(\mu, 1, c_1, \cdots, c_n)$ on the homology classes of the standard basis $B, F, E_1, \cdots, E_n$.
  Then the symplectic forms in this cohomology class are isotopic.  cf.  \cite{McD96, LL01}
 Furthermore we define {\it reduced}  vectors $u=(\mu, 1, c_1, \cdots, c_n)$  to be those  with  $\mu>0, c_1+c_2\ne 1, 0<c_i<1,$ $ c_1\geq c_2\geq \cdots \geq c_n.$

 \end{dfn}

We remark that the reduced symplectic classes give a set of orbits under the action of the diffeomorphism group. Since diffeomorphic forms have homeomorphic symplectomorphism groups, it is sufficient to study the stability conjecture for reduced classes. 
 We will be concerned with symplectic deformations inside a  convex region $\Delta^{n+1}$ in $\RR^{n+1}$,  the right side of the reduced symplectic cone, given as a linear truncation with $\mu \geq 1$. We refer this region as the symplectic cone for the brevity of writing.  Strictly speaking, $\Delta^{n+1}$ is a slice of $\mC_M$ where the fiber class has area 1.

Note that the way we partition this symplectic cone is by looking at the homology classes of potential symplectic curves. To that end, we will now fix some notation:

\begin{dfn}\label{sw}
Let $\mathcal S_{\omega}$ denote the set of homology classes of embedded $\omega$-symplectic curves and $K_{\w}$ the symplectic canonical class. For any $A\in \mathcal S_{\omega}$, by the adjunction formula,
  \begin{equation}\label{AF}
    K_{\omega}\cdot A=-A\cdot A -2+2g(A).
\end{equation}

For each $A\in  \mathcal S_{\omega} $ we associate the integer
$$ {\cod_A}=2(-A\cdot A-1+g).$$

\end{dfn}

 We can define $\mS_u$, where $u=[\w]$ accordingly, only using the cohomology data of $\w$.   We are going to denote $\mS^{<0}_u$ by the subset of $\mS_u$, which is the classes having positive codimension, which, if represented by pseudo-holomorphic curves, correspond to curves of negative indexes.

\begin{rmk}
\begin{itemize}

\item By Lemma 2.4 in \cite{ALLP}, $\mS_u=\mS_w$.

\item By the main theorem in \cite{LU06}, the negative self-intersection classes that admit embedded representatives are exactly those that have positive pairings with the class $\w.$ Namely, we can find some integral class $u'$ that admits some embedded curve in those classes of  $\mS^{<0}_u$, and the symplectic inflation of \cite{LU06} allows us to change the class $u'$ into $u.$
 \end{itemize}
\end{rmk}

Let us explain the simplicial structure of $\Delta^{n+1}$ that guides the stability conjecture of the symplectomorphism group.

\begin{dfn}

An {\bf extremal exterior wall} (not contained in the cone) of $\Delta^{n+1}$, is given by $\{u| u\cdot E=0\}$ where $E\in \mE$ is any exceptional class. 

Lastly, a {\bf reduction exterior wall} (contained in the cone) of $\Delta^{n+1}$, is given by
$\{u| u\cdot D=0$ where $D$ is one of the square $-2$ homology classes given by the reduced condition in $\mR=\{ F-E_1-E_2, E_i-E_j, i>j>0\}$.

A  partition of the symplectic cone $\Delta^{n+1}$ into {\bf chambers}  is given by  separations with {\bf interior walls} given by  $u\cdot A = 0,$ where $A\in \mS^{< 0}_u \setminus (\mE\cup \mR).$

\end{dfn}
It is important to point out that for the purpose of proving the current results of stability in the first two homotopy groups, we will only need to partition with interior walls where the classes $A$ are of {\it section type }   ${B+kF-\sum_i c_iE_i}$.

Here we draw the picture for  $\Delta^3$ (with chambers when $\mu \ge 1$) of a two-point blowup of $\Sigma_g\times S^2:$

 \begin{minipage}{.5\textwidth}
  \centering
\includegraphics[height=2.5in]{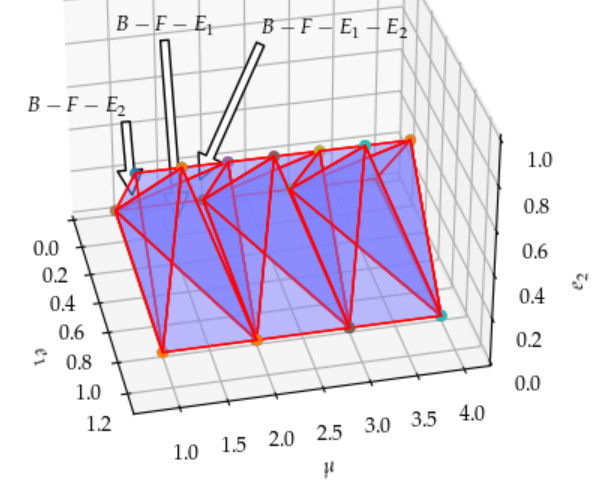}
  \label{rule2}
    \end{minipage}%
    \begin{minipage}{.5\textwidth}
Notice that in this figure, the $Y-$axis means the area ratio $\mu,$  $X$ and $Z$ are blowup sizes of $E_1, E_2$ respectively.  Clearly, $Y$ goes to $\infty.$     The red triangles are the walls defined by the curve classes, and the tetrahedron is stable chambers in Theorem \ref{stab01intro}.
    \end{minipage}%

\subsection{The homotopy fibration and the inflation strategy}
Overwhelmingly papers discussing the topology of spaces of symplectomorphism groups in the ruled surfaces settings base the results on the following fibration first initiated by Kronheimer\cite{Kro99}  and then adapted by McDuff\cite{McDacs}.

McDuff's approach in  \cite{McDacs} was to consider  Kronheimer's fibration in  \cite{Kro99}:

\begin{equation}\label{fib}
  Symp(M, \w)\cap \Diff_0(M) \to \Diff_0(M) \to \mT_{\w},
\end{equation}

where $ \mT_{\w}$ represents the space of symplectic forms in the class $[\w]$ and isotopic to a given form, and $\Diff_0(M)$ is the identity component of the diffeomorphism group. Moser type results provide a transitive action of $\Diff_0(M)$ on  $ \mT_{\w}$ and gives  the fibration \ref{fib}.

To facilitate maps between such fibrations as $\omega$ is deformed, McDuff'  \cite{McDacs}\footnote{ McDuff's original results were written in terms of homotopy fibration where the larger space of taming almost complex structures was used. Using the fact that the space of taming almost complex structures is homotopy equivalent to the one we use of compatible structures, for reasons explained in Section \ref{tcinf} we will use the compatible structure spaces.}  uses the (weak) homotopy equivalence between $ \mT_{\w}
$ and the space $\mA_{\w}$ (which is the space of $\w'$-compatible almost complex structures, where $\w'$ is any symplectic form isotopic to $\w$) to construct a homotopy fibration 
\begin{equation}\label{fibacs}
    Symp(M, \w)\cap \Diff_0(M) \to\Diff_0(M) \to \mA_{\w}.
\end{equation}

This allows one to use inflation techniques described in Section \ref{tcinf}  to move by inclusions the (stratified) spaces $\mA_{\w}$  as the symplectic form varies.

Thus, to succeed in stability results one needs  to establish such (a) existence of strata (b) sufficient embedded or nodal $J$ holomorphic curves for {\it every $J$} in order to move these strata using inflation.

Inflation of symplectic forms that tame a given almost complex structure we first introduce by McDuff \cite{McDacs} and later by Buse \cite{Buse11}.

As discussed in \cite{ALLP} Section 4.4,  their proofs made the unwarranted assumption that for
every $\w$-tame $J$ and every $J$-holomorphic curve $C$ one can find a family of normal planes
that is both $J$ invariant and $\w$-orthogonal to $TC$. This is true only if $\w$ is compatible with
$J$ at every point of $C.$
A correct version of their statements, which works with the proof provided  in \cite{McDacs} for positive self-intersection curves and in Theorem 1.1 in \cite{Buse11} for negative self-intersection curves, combines as:

  \begin{thm}  For a 4-manifold $M$, given a compatible pair  $(J,\w),$ one can inflate along a $J$-holomorphic curve $Z$, so that there exist a symplectic form $\w'$ taming  $J$ such that $[\w']= [\w]+ t PD(Z), t\in [0,\mu)$ where $\mu= \infty$ if $Z\cdot Z\ge0$ and $\mu= \frac{\w(Z)}{(-Z\cdot Z)}$ if $Z\cdot Z<0$.
\end{thm}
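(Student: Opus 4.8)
\emph{Proof strategy.}
The plan is to reduce the global assertion to a semi-local construction in a tubular neighborhood of $Z$ and to produce an explicit closed $2$-form $\tau$ in the class $PD(Z)$, supported near $Z$, so that $\w_t:=\w+t\tau$ tames $J$ for all $t$ in the asserted range. Since $\w$ and $\tau$ are both closed, each $\w_t$ is closed, and in dimension four the taming inequality $\w_t(v,Jv)>0$ for $v\neq 0$ already forces nondegeneracy; hence it suffices to verify taming. Outside a neighborhood of $Z$ the form $\tau$ vanishes and $\w_t=\w$ tames $J$ automatically, so the entire content is concentrated on the tube. A crucial preliminary step -- and precisely the place where \emph{compatibility} of the pair $(J,\w)$ rather than mere taming is used, as flagged in the paragraph preceding the statement and in \cite{ALLP} -- is to fix along $Z$ a splitting $TM|_Z=TZ\oplus N_Z$ that is simultaneously $J$-invariant and $\w$-orthogonal. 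Compatibility of $\w$ with $J$ at the points of $Z$ guarantees such a splitting, so that $N_Z$ is a Hermitian line bundle of degree $Z\cdot Z$ and a neighborhood of $Z$ is identified with a disk bundle in $N_Z$.

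On this disk bundle I would write the Thom form by the standard connection-theoretic recipe. Choosing a Hermitian connection on $N_Z$ with a curvature representative $e$ of $c_1(N_Z)$, an angular connection $1$-form $\alpha$, a radial coordinate $r$, and a monotone cutoff $\lambda(r)$ interpolating between the two boundary values, one sets
\[
\tau=\tfrac{1}{2\pi}\,d\bigl(\lambda(r)\,\alpha\bigr),
\]
which is closed, satisfies $\int_{\mathrm{fiber}}\tau=1$, and represents $PD(Z)$. It splits into a fiber term proportional to $\lambda'(r)\,dr\wedge\alpha$, nonnegative on fiber complex lines once $\lambda$ is taken monotone, and a horizontal term proportional to $\lambda(r)\,\pi^{*}e$, whose pairing on $TZ$-complex lines carries the sign of the degree $Z\cdot Z$. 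When $Z\cdot Z\ge 0$ one chooses a nonnegative curvature representative, so both contributions to $\tau(v,Jv)$ are nonnegative while the $TZ$-directions remain controlled by $\w$; then $\w_t(v,Jv)>0$ for every nonzero $v$ and every $t\ge 0$, giving the bound $\mu=\infty$.

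The case $Z\cdot Z<0$ is where the genuine difficulty lies and where I expect the main obstacle. Now the curvature term is negative on $TZ$-complex lines, so $t\tau$ can decrease $\w_t(v,Jv)$ in the horizontal directions, the worst case occurring along $Z$ itself at $r=0$. The sharp constraint is dictated by $Z$: since $Z$ is $J$-holomorphic, $\w_t$ can tame $J$ only if the area $\w_t(Z)=\w(Z)+t\,(Z\cdot Z)=\w(Z)-t\,(-Z\cdot Z)$ stays positive, which forces $t<\tfrac{\w(Z)}{-Z\cdot Z}$. The heart of the argument, following Theorem 1.1 of \cite{Buse11}, is to show that this necessary bound is also sufficient: for each $t$ strictly below the threshold one chooses the curvature representative $e$ proportional to $\w|_{Z}$ (a negative multiple, consistent with $\int_Z e = 2\pi(Z\cdot Z)<0$) and tunes the profile $\lambda$, so that the pointwise horizontal inequality over all of $Z$ reduces exactly to the positivity of $\w_t(Z)$, while for $r>0$ the factor $|\lambda(r)|<1$ makes the estimate strictly better and the fiber term only helps. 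Assembling this local form with the unmodified $\w$ outside the tube yields a taming symplectic form $\w'$ in the class $[\w]+t\,PD(Z)$ for every $t$ in the stated range, which completes the proof. The delicate point throughout is to keep the pointwise taming estimate valid across the whole neighborhood, including the cross-terms between base and fiber created by the connection, rather than merely the averaged area condition.
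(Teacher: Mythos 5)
Your proposal is correct and follows essentially the same route as the paper, which does not reprove this statement but imports it from \cite{McDacs} (the case $Z\cdot Z\ge 0$) and Theorem 1.1 of \cite{Buse11} (the case $Z\cdot Z<0$); your tubular-neighborhood Thom-form construction, the use of compatibility along $Z$ to obtain a $J$-invariant, $\w$-orthogonal splitting, and the sharp threshold $t<\frac{\w(Z)}{-Z\cdot Z}$ obtained by taking the curvature representative proportional to $\w|_{Z}$ are exactly the ingredients of those proofs (and of the disk-bundle computation the authors carry out for the singular two-component version). The one step you leave implicit --- the quadratic estimate controlling the base--fiber cross-terms of $J$ for $r>0$, uniformly in $t$ --- is the technical heart of \cite{Buse11}, and you correctly flag it as the delicate point rather than glossing over it.
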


 As further explained in \cite{ALLP}, for four-dimensional manifolds $M$ with $b^+=1,$ (such as the blow-up ruled surfaces) the strategy is to perform the above inflation to obtain a $J$-tame $\w'$ in the correct cohomology class. Then using the result of  Li and Zhang \cite{LZ11} giving the comparison of tame/compatible cones, one has an  $\w''$ compatible with the given $J$. Anjos et all call this {\bf $b^+=1$ $J-$compatible inflation}.
 
Note that in the non-minimal case, cohomologous forms are not known to be isotopic, in contrast to the minimal case.
We'll consider a larger space ${\mA_{[\omega]}}$, which is the space of almost complex structures taming cohomologous forms. The space $\mA_{[\omega]}$ is the union of its mutually homeomorphic components given by the isotopy classes of symplectic forms in the cohomology class $[\omega]$. We showed in \cite{BL1}  that one can  keep track of isotopy classes (hence the components) during the $b^+=1$ $J-$compatible inflation with $J$ in the larger space $\mA_{[\omega]}$.

Namely, the inflation process does not change connected components.

Moreover, as explained in \cite{BL1} there is a natural correspondence between the connected components of the spaces $\mA_{[\omega]}$ and the spaces of all cohomologous symplectic forms $\mT_{[\omega]}$ and they are mutually homotopy equivalent.

If we pick one such pair of components $(\mT_{\omega}, \mA_{\omega})$ we have a  the homotopy fibration $G_{\w}\to \Diff_0(M) \to \mA_{ \w}$.

Additionally, as soon as one successfully inflates to establish an inclusion of components of almost complex structure spaces for different cohomology classes $[\omega]$ and $[\omega']$ the following diagram can be used to establish homotopy maps between the spaces of symplectomorphisms groups.

  \begin{equation}
\begin{CD}
G_{\w} @>>> \Diff_0(M)@>>> \mA_{ \w}\\
@VVV @VVV @VVV \\
G_{\w'} @>>>  \Diff_{0}(M)   @>>>  \mA_{ \w'}.\\
\end{CD}
\end{equation}

\subsection{Picking a canonical component of the spaces of almost complex structures in the case $\mu> max\{g,n\} $}

  Denote by $J_{std}$ the integrable structure obtained by consecutive complex blow-ups of the split complex structure on $\Sigma_g \times S^2$.
  First note that $J_{std}$ tames (is compatible with) some symplectic form,  when $\mu$ is very large. This is because for $J_{std}$, this is because we are able to choose the blowup points on a section in class $B$.  The divisors in the blowup are all strict transforms from the minimal model, and by Nakai-Moishezon we are able to find an ample class in $u=(\mu, 1, c_1,\cdots, c_n)$.
  
  We will show that  by performing J-tame (or compatible) inflation, any $J$ in the open stratum $\mA^{top}_u$ as in definition  \ref{subsetA} is compatible with some  symplectic form in $u'=(\mu', 1, c_1,\cdots, c_n)$, where $\mu'>max \{g,n\}$.  This means that $J_{std}$ tames (is compatible with) some symplectic form in any cohomology class when $\mu>max \{g,n\}$.  Hence  using the connected component that contains $J_{std},$ we have a canonical choice of connected components in each $\mA_u$, and by abuse of notation we will from now on call $\mA_u$ those canonical component.

\subsection{Decomposition of $\mA_u$ via pseudo-holomorophic curves}
In what follows we recall the general strategy to define  subsets of the space of almost complex structures $\mA_{u}$. We will adapt this general stratification to our needs for the current results in Section \ref{strA}.

We first define a driven subset of $\mA_{u}$, labeled by set $\mC\subset \mathcal S^{< 0}_u$ for a given isotopy class of $\w$ as following:

 \begin{dfn} \label{fine decomposition}
  A  collection  $\mC\subset \mathcal S^{< 0}_u$ is called admissible if
  $$\mC=\{A_1, \cdots, A_i,\cdots ,A_q |\quad  A_i\cdot A_j \geq 0, \quad \forall  i\neq j\}.$$ Given an admissible collection $\mC$, we  define the real codimension
  of the labelled collection ${\mC}$ as  $$\cod({\mC})= \sum_{A_i\in \mC} \cod_{A_i}=\sum_{A_i\in \mC}  2(-A_i\cdot A_i-1+g_i).$$

  Define the   $\mC$-driven {\bf  subset}
  $$\mA_{u, \mC}:=\{ J\in \mA_{u}|  A\in \mathcal S_{u}^{<0} \hbox{  has an embedded $J$-hol representative if }   A\in \mC\}.$$
 
  And if $\mC=\{A\}$ contains only one class $A$, we will use $\mA_A$ for $\mA_{\{A\}}$

 \end{dfn}

 The following Proposition  \ref{stratum} and Corollary are Proved in \cite{BL1}.

\begin{prp} \label{stratum} 

Let $(X,\omega)$ be a symplectic 4-manifold. 
 Suppose $U_{\mC}\subset\mA_{u}$ is a subset
characterized by the existence of a configuration of embedded
$J$-holomorphic  curves $C_{1}\cup C_{2}\cup\cdots\cup C_{N}$ of  positive codimension as in Definition \ref{sw} with $\{ [C_{1}], [C_{2}],\cdots , [C_{N}]\}=\mC$.
Then $U_{\mC}$ is a co-oriented Fr\'echet suborbifold
   of $\mA_{u}$ of (real) codimension $2N-\sum g_i-2c_{1}([C_{1}]+\cdots+ [C_{N}]).$   
   
   Note here $c_1(\w)=-K_\w$ and by adjunction $2g-2=KA+AA$, we have the following two equivalent way to write the codimension $Cod=\sum_i K\cdot [C_i]-[C_i]^2,$ or $Cod= 2(-C_i\cdot C_i-1+g_i),$ as in Definition \ref{fine decomposition}.

   In particular, this statement covers the case where $(X,\omega)$ is a ruled surface and $C_i's$ all have negative squares.
\end{prp}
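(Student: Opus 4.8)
The plan is to realize $U_{\mC}$ as the image of a Fredholm forgetful projection from a universal moduli space of embedded curves, which is the standard technique for stratifying spaces of almost complex structures by pseudo-holomorphic data (cf.\ \cite{McDacs, ALLP}).

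I would first treat a single class, $\mC=\{A\}$ with $N=1$. Working in a Banach completion of $\mA_u$ (say $C^\ell$ or a suitable Sobolev space of compatible structures), form the universal moduli space
$$\mM_A=\{(J,C)\mid J\in\mA_u,\ C\ \text{an embedded }J\text{-holomorphic curve with }[C]=A\}.$$
Since $C$ is embedded, hence somewhere injective, variations of $J$ surject onto the cokernel of the linearized normal operator $D_C$; this is the usual universal-transversality argument, so $\mM_A$ is a smooth Banach manifold, and by elliptic regularity it is independent of $\ell$ and Fréchet. The forgetful projection $\pi\colon\mM_A\to\mA_u$ is Fredholm of index equal to the virtual dimension $2(A\cdot A+1-g)=-\cod_A$ of the space of unparametrized curves in class $A$, which is negative precisely because $A\in\mS^{<0}_u$. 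When $A\cdot A<0$ the normal bundle $N_C$ has negative degree, so $H^0(N_C)=0$, i.e.\ $\ker D_C=0$: the curve is infinitesimally rigid, and by positivity of intersections it is the unique representative of its class for its given $J$ (two distinct representatives would force $A\cdot A\ge 0$). Hence $\pi$ is an injective immersion, its image $U_A=\pi(\mM_A)$ is a Fréchet submanifold of codimension $\dim\coker D_C=\cod_A$, and the normal bundle of $U_A$ is canonically $\coker D_C\cong H^1(N_C)$. As $D_C$ is complex-linear to leading order this cokernel is a complex vector space, which furnishes the canonical co-orientation.

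Next, for a general admissible $\mC=\{A_1,\dots,A_N\}$ as in Definition \ref{fine decomposition}, I would run the same argument on the universal moduli space of the full configuration $C_1\cup\cdots\cup C_N$. The admissibility hypothesis $A_i\cdot A_j\ge 0$ is exactly what allows the components to coexist for a common $J$, meeting in finitely many positive transverse points, and pins down a unique configuration; the combined linearized operator is $\bigoplus_i D_{C_i}$, so its kernel vanishes and its cokernel is $\bigoplus_i\coker D_{C_i}$, of real dimension $\sum_i\cod_{A_i}=\cod(\mC)$ and again complex, hence co-oriented. This exhibits $U_{\mC}$ as a co-oriented Fréchet suborbifold of codimension $\cod(\mC)$, the orbifold language only accounting for finite isotropy that is in fact trivial for embedded somewhere-injective configurations. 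The equality of the various displayed formulas for the codimension is then just the adjunction identity $2g_i-2=K\cdot A_i+A_i\cdot A_i$.

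The step I expect to be the main obstacle is the transversality bookkeeping for the configuration: one must check that variations of $J$ surject onto each factor $\coker D_{C_i}$ simultaneously, which is delicate near the points where distinct components meet, so that the strata $U_{A_i}$ genuinely intersect transversally and their codimensions add. The other essential input, and the reason the clean identity codimension $=\cod(\mC)$ holds, is the rigidity $\ker D_{C_i}=0$ coming from the negative squares $A_i\cdot A_i<0$ (the case of interest here); were some component allowed to move in a family, the image would pick up additional codimension from the corresponding $H^0$, and the statement would require separate treatment.
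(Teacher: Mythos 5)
Your overall strategy---realizing $U_{\mC}$ as the image of the Fredholm forgetful projection from the universal moduli space of embedded configurations, with rigidity coming from $\ker D^N_{C_i}=0$ and the co-orientation from the complex structure on the cokernel---is exactly the argument behind this proposition (the paper gives no proof here but cites \cite{BL1}, which follows McDuff's scheme from \cite{McDacs}), and for components of negative square your write-up is essentially complete. Two smaller points. First, the vanishing $\ker D^N_C=0$ when $c_1(N_C)=[C]^2<0$ is not Riemann--Roch for a holomorphic bundle, since $D^N_C$ is only a real-linear Cauchy--Riemann operator; you need the Carleman-similarity/positivity-of-zeros argument that a nonzero kernel element has isolated zeros of positive multiplicity summing to $c_1(N_C)$, which is impossible for negative degree. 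Second, your computation implicitly yields codimension $2N-2\sum g_i-2c_1([C_1]+\cdots+[C_N])$, which is what actually equals $\sum_i \cod_{A_i}$; the formula displayed in the statement appears to be missing a factor of $2$ on $\sum g_i$.

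The caveat you append at the end, however, is not a side remark but a genuine gap in the proof of the proposition as stated. ``Positive codimension as in Definition \ref{sw}'' means $2(-A\cdot A-1+g)>0$, i.e. $A\cdot A<g-1$, which for $g\ge 2$ admits classes with $A\cdot A\ge 0$; and such classes really occur later in the paper: the index $-2$ section classes $B'=B+kF-\sum E_i$ defining the strata $\mA^{2}_{u,B'}$ have $B'\cdot B'=g-2\ge 0$ once $g\ge 2$. For these the normal operator can have nontrivial kernel, the curve may move in a family, $\pi$ fails to be an injective immersion, and the image is a priori only a countable union of pieces of codimension at least $\cod_A$ (stratified further by $\dim\ker D^N_C$), not a suborbifold of codimension exactly $\cod_A$. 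To close this you would either have to prove rigidity for the specific genus-$g$ section classes by other means, or restratify by kernel dimension and check that the possibly larger codimension is harmless for the later inflation and $\pi_0,\pi_1$ arguments. As written, your proof establishes the ``in particular'' clause (all $C_i$ of negative square) but not the general statement.
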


\begin{cor}
  The  subsets $\mA_{u, \mC}$ are suborbifolds of $\mA_{u}$ codimension of $Cod(\mA_{u,\mC})$.
\end{cor}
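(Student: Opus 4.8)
The plan is to identify the $\mC$-driven subset $\mA_{u,\mC}$ of Definition \ref{fine decomposition} with a configuration subset $U_{\mC}$ of exactly the type handled by Proposition \ref{stratum}, and then quote that proposition. Everything rests on showing that the condition ``each class of $\mC$ is represented by an embedded $J$-holomorphic curve'' coincides with ``there exists an embedded configuration $C_1\cup\cdots\cup C_N$ of positive-codimension curves with $\{[C_1],\dots,[C_N]\}=\mC$.''

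First I would record the uniqueness of the embedded representatives. Fix $J\in\mA_{u,\mC}$. For each $A_i\in\mC\subset\mS^{<0}_u$ we have $A_i\cdot A_i<0$, so if $C_i$ and $C_i'$ were two distinct embedded $J$-holomorphic representatives of $A_i$, positivity of intersections would force $[C_i]\cdot[C_i']\geq 0$, contradicting $[C_i]\cdot[C_i']=A_i\cdot A_i<0$. Hence each $A_i$ has a unique embedded $J$-holomorphic representative $C_i$, and being embedded it is simple.

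Next I would assemble these curves into a configuration. Since $\mC$ is admissible, $A_i\cdot A_j\geq 0$ for $i\neq j$; and as the $A_i$ are pairwise distinct homology classes, the curves $C_i$ are pairwise distinct subsets of $M$. Positivity of intersections then shows that any two of them meet in at most finitely many points, each with positive local intersection number, so $C_1\cup\cdots\cup C_N$ is precisely an embedded configuration of positive-codimension curves of the kind appearing in Proposition \ref{stratum}, with $\{[C_1],\dots,[C_N]\}=\mC$. Conversely, any $J$ carrying such a configuration represents every class of $\mC$ by an embedded curve, so it lies in $\mA_{u,\mC}$. This yields the set-theoretic equality $\mA_{u,\mC}=U_{\mC}$.

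Finally I would invoke Proposition \ref{stratum}: it asserts that $U_{\mC}$ is a co-oriented Fr\'echet suborbifold of $\mA_u$ of real codimension $\sum_i\bigl(K_\w\cdot[C_i]-[C_i]^2\bigr)$, and the remark following it rewrites this quantity as $\sum_i 2(-A_i\cdot A_i-1+g_i)=\cod(\mC)$. Transporting this through the equality $\mA_{u,\mC}=U_{\mC}$ gives the corollary. The only genuine content — and the step I expect to need the most care — is the identification $\mA_{u,\mC}=U_{\mC}$: one must verify that the per-class existence hypothesis really forces the representatives to be distinct, simple, and positively intersecting, so that the hypotheses of Proposition \ref{stratum} are met exactly. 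It is the admissibility of $\mC$ together with the negativity of its classes that make this automatic, so that no transversality or gluing analysis is needed beyond the cited proposition.
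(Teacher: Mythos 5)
Your overall strategy --- identifying $\mA_{u,\mC}$ set-theoretically with the configuration set $U_{\mC}$ and then quoting Proposition \ref{stratum} for the suborbifold structure and the codimension count --- is exactly the intended route; the paper gives no argument of its own here, citing \cite{BL1} for both the proposition and the corollary, and your reduction is the natural one.

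There is, however, one false intermediate claim. You assert that every $A_i\in\mC\subset\mS^{<0}_u$ satisfies $A_i\cdot A_i<0$. But $\mS^{<0}_u$ consists of the classes of \emph{positive codimension} (equivalently, negative index), not of negative square: since $\cod_A=2(-A\cdot A-1+g(A))$, a class of genus $g\ge 2$ can have $\cod_A>0$ while $A\cdot A\ge 0$. This is not a corner case --- the section class $B$ has $B^2=0$ and $\cod_B=2g-2$, and the index $-2$ section classes $B+kF-\sum_i E_i$ that generate the strata $\mA^{2}_{u,B}$ in Section \ref{strA} have square $g-2\ge 0$. For such classes your positivity-of-intersections argument does not yield uniqueness of the embedded representative, and uniqueness need not hold. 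Fortunately the step is dispensable: the equality $\mA_{u,\mC}=U_{\mC}$ only requires choosing \emph{some} embedded $J$-holomorphic representative of each class of $\mC$ (the resulting curves are pairwise distinct because their homology classes are), after which Proposition \ref{stratum} applies verbatim and the remark following it converts its codimension formula into $\cod(\mC)$. Deleting the uniqueness discussion, and not insisting that the components meet positively in finitely many points (which Proposition \ref{stratum} does not require as a hypothesis), leaves a correct proof.
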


\subsection{Identifying strata  of various  spaces of almost complex structures using inflation}\label{tcinf}

\section{Decompositions of $\mA_u$ and stability under inflation}\label{stabproof}

In this section, we prove the Theorems \ref{stab01intro} and \ref{stabintro} by inflating along embedded or nodal J-holomorphic curves.

\subsection{Curves}

  Let us first recall several results from \cite{Zhang16}. Using our basis notation $B, F, E_1, \cdots, E_n$ of $H_2(X,\ZZ)$,  here are some of Zhang's results that we will use.

For a fixed $J$, let  $\mathcal M_D$ denote the moduli space of subvarieties ($J$ holomorphic images) in class $D$. 

\begin{thm}[Theorem 1.1 or 3.4 in \cite{Zhang16}]\label{intro1}
Let $(M, \omega)$ be an irrational symplectic ruled surface with a base of genus $g$ and $E$ an exceptional class. Then for any  $J$ tamed by $\omega$ and any subvariety in class $E$, each one of its irreducible components is a rational curve of negative self-intersection. Moreover, the moduli space $\mathcal M_E$ is a single point.
\end{thm}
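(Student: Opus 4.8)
The plan is to reduce everything to McDuff's $J$-holomorphic ruling and then play off positivity of intersections against the irrationality $g\ge 1$. Fix an $\omega$-tame $J$ and write a subvariety in class $E$ as $\Theta=\sum_i m_i C_i$, a finite sum of distinct irreducible $J$-holomorphic curves $C_i$ of class $A_i$ with multiplicities $m_i\ge 1$ and $\sum_i m_i A_i=E$. Since $E$ is exceptional, $E\cdot E=-1$ and $K_\omega\cdot E=-1$, so its arithmetic genus is $p_a(E)=1+\tfrac12(E\cdot E+K_\omega\cdot E)=0$. The first thing I would establish is the purely homological fact that $E\cdot F=0$, where $F$ is the fiber class. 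Since $E$ is an exceptional class, for a generic $\omega$-tame $J'$ it has an embedded $J'$-holomorphic sphere representative $C'$ (Taubes); by McDuff's structure theorem the $F$-curves form a $J'$-holomorphic ruling $\pi\colon M\to\Sigma_g$, and $\pi|_{C'}$ is a branched cover of degree $[C']\cdot F=E\cdot F\ge 0$. A sphere admits no positive-degree branched cover of $\Sigma_g$ once $g\ge 1$ (Riemann--Hurwitz would give $2\le (E\cdot F)(2-2g)\le 0$), so $E\cdot F=0$; being an intersection number, this holds for the given $J$ as well. For that $J$ and any $\Theta$, positivity against a generic (hence $J$-holomorphic) fiber gives $A_i\cdot F\ge 0$ for each component, while $\sum_i m_i(A_i\cdot F)=E\cdot F=0$ forces $A_i\cdot F=0$ for all $i$. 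Thus every $C_i$ is contracted by $\pi$, i.e. contained in a fiber, and since each fiber lies in class $F$ with $p_a(F)=1+\tfrac12(F\cdot F+K_\omega\cdot F)=0$, it is a tree of rational curves; hence every $C_i$ is rational.

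The crux is to upgrade this to strictly negative self-intersection, simultaneously with connectedness. Grouping the components of $\Theta$ by the base point over which they lie, write $\Theta=\bigsqcup_{p\in P}\Theta_p$ over the finitely many points whose fibers meet $\Theta$; distinct fibers are disjoint, so the pieces are pairwise disjoint. Because $\chi(\mathcal{O})$ is additive over disjoint unions while $p_a(\Theta)=p_a(E)=0$ depends only on the class, one gets $\sum_{p} p_a(\Theta_p)=|P|-1$. On the other hand each $\Theta_p$ is an effective cycle supported on a single fiber, whose components have a negative semi-definite intersection form with radical $\mathbb{R}F$, so $p_a(\Theta_p)\le 0$. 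Hence $|P|-1\le 0$, giving $|P|=1$: every subvariety in class $E$ is connected and contained in a single fiber $\Phi_p$. As $E$ is not a multiple of $F$ (since $E\cdot E=-1\ne 0$), this fiber must be reducible, all of its components then have strictly negative self-intersection, and $\Theta$ is a positive combination of them. This proves the first assertion.

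Finally, for $\mathcal{M}_E$ being a single point, let $\Theta_1,\Theta_2$ both represent $E$; by the above each is contained in a single fiber. If these fibers were distinct, $\Theta_1$ and $\Theta_2$ would be disjoint and $\Theta_1\cdot\Theta_2=0$, contradicting $\Theta_1\cdot\Theta_2=E\cdot E=-1$. So both lie in the same fiber $\Phi_p$, and writing $\Theta_j=\sum_\alpha n^{(j)}_\alpha D_\alpha$ over the components $D_\alpha$ of $\Phi_p$, the equality of classes gives $\sum_\alpha\bigl(n^{(1)}_\alpha-n^{(2)}_\alpha\bigr)[D_\alpha]=0$. The classes of the components of a fiber are linearly independent apart from the single relation expressing the fiber cycle as $F$, so $n^{(1)}_\alpha-n^{(2)}_\alpha$ is a common multiple of the fiber multiplicities; that multiple must vanish because $[\Theta_1]-[\Theta_2]=0$ is not a nonzero multiple of $F$. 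Hence $\Theta_1=\Theta_2$ and $\mathcal{M}_E$ is a single point.

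I expect the main obstacle to be the first step together with the arithmetic-genus count, since these are exactly where $g\ge 1$ is used: Riemann--Hurwitz kills $E\cdot F$, and the fiber being a rational tree with negative semi-definite component lattice forces connectedness and negativity. Everything else is formal positivity plus the light-cone behaviour of $F$, and the one external input I rely on is McDuff's $J$-holomorphic ruling for ruled surfaces, valid for every tame $J$.
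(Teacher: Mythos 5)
First, a point of context: the paper does not prove this statement. Theorem \ref{intro1} is imported verbatim from [Zhang16] and used as a black box, so there is no in-paper argument to compare yours against. Judged on its own, your reconstruction follows what is essentially Zhang's route --- force every irreducible component into a leaf of the $F$-foliation, then exploit the structure of genus-zero fibers --- and the skeleton is sound. Be aware, though, that it is not an independent proof: the positivity against ``a generic (hence $J$-holomorphic) fiber,'' the grouping of components by the fiber containing them, and the disjointness of distinct fibers all presuppose that for \emph{every} tame $J$ there is a unique subvariety in class $F$ through each point with $\mathcal M_F\cong\Sigma_g$; that is Theorem \ref{curvesexist}, i.e.\ Theorem 1.2 of the same source. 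So you are deriving Theorem \ref{intro1} from Theorem \ref{curvesexist}, which is legitimate but should be said explicitly. A minor quibble in the first step: Riemann--Hurwitz does not literally apply to $\pi|_{C'}$, which need not be a branched cover; the correct statement is that a continuous map $S^2\to\Sigma_g$ of nonzero degree is impossible for $g\ge1$ because it would inject $H^1(\Sigma_g;\QQ)$ into $H^1(S^2;\QQ)=0$. The conclusion $E\cdot F=0$ stands.

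The one genuine gap is the inequality $p_a(\Theta_p)\le 0$. Negative semi-definiteness of the component lattice controls only $\Theta_p^2$, whereas $2p_a(\Theta_p)-2=\Theta_p^2+K\cdot\Theta_p$ and $K\cdot D_\alpha=-2-D_\alpha^2$ is \emph{positive} for components of square $\le -3$; in general a negative-definite tree of rational curves can support effective cycles of positive arithmetic genus (minimally elliptic configurations), so the sign of the lattice alone cannot give the bound. What rescues the step is the additional fact that the $D_\alpha$ are components of a fiber in a class $F$ with $p_a(F)=0$; for such configurations $p_a(Z)\le 0$ does hold for every effective $Z$ supported on the fiber, but this requires either the Li--Zhang genus subadditivity inequality (the result quoted in the proof of Lemma \ref{gj=0}) or an Artin/Zariski-type computation, neither of which your one-line justification supplies. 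The statement this step is meant to establish --- that the subvariety in class $E$ lives over a single point of the base --- is true, and your concluding uniqueness argument (same fiber because $E\cdot E=-1\ne0$, then linear independence of the component classes modulo the single fiber relation) is correct, so the proof is repairable by inserting the missing genus estimate.
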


\begin{thm}[Theorem 1.2 in \cite{Zhang16}]\label{curvesexist}
Let $(M, \omega)$ be an irrational ruled surface of base genus $g\ge 1$. Then for any tamed $J$ on $M$,
\begin{enumerate}
\item Through  any given point there is a unique subvariety in the positive fiber class $F$.
\item The moduli space $\mathcal M_F$ is homeomorphic to $\Sigma_g$, and there are only finitely many reducible subvarieties appearing as images of singular J-holomorphic curves in class $F$.
\end{enumerate}
\end{thm}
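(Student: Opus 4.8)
The plan is to recover McDuff's description of the ruling of a symplectic ruled surface and to upgrade it so that it holds for \emph{every} $\omega$-tame $J$, by combining a deformation-invariant existence count with the three pillars of four-dimensional pseudoholomorphic curve theory: the adjunction formula, positivity of intersections, and automatic transversality. I would begin by recording the numerics. The fiber is rational and $F\cdot F=0$, so adjunction gives $K_\omega\cdot F=-2$, i.e. $c_1(F)=2$; hence the unparametrized moduli space of genus-zero curves in class $F$ has expected real dimension $2c_1(F)-2=2$, and imposing one point constraint cuts this down to dimension $0$. These are exactly the dimensions predicted by the two statements to be proved.

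\textbf{Existence and uniqueness through a point.} The Gromov--Taubes invariant of $F$ with a single point constraint equals $1$: it is computed in the split integrable model, where the honest ruling passes a unique fiber through each point, and it is deformation-invariant (the relevant $b^+=1$ wall is never crossed, since $\omega\cdot F>0$ throughout). Thus for every $\omega$-tame $J$ there is a subvariety in class $F$ through a generic point, with count exactly $1$. To pass from a generic point to an arbitrary one I would argue by Gromov compactness: the incidence space $\{(x,C)\mid x\in C,\ [C]=F\}$ is compact and projects properly to $M$ with generic fiber a point, so, $M$ being connected, the projection is onto. Uniqueness at every point then follows from positivity of intersections: two distinct irreducible subvarieties $C_1\neq C_2$ in class $F$ would satisfy $C_1\cdot C_2\geq 1$, contradicting $[C_1]\cdot[C_2]=F\cdot F=0$; the same positivity rules out two distinct (possibly reducible) configurations sharing a point, once one checks they cannot share a common irreducible component.

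\textbf{Structure of the curves and of $\mathcal{M}_F$.} For an irreducible subvariety $C$ in class $F$, the adjunction inequality reads $0\leq g(C)\leq 1+\tfrac12(F\cdot F+K_\omega\cdot F)=0$, forcing $C$ to be a smoothly embedded sphere with no singular points. Its normal bundle then has degree $C\cdot C=0\geq -1$, so the Hofer--Lizan--Sikorav automatic transversality criterion applies and the linearized $\overline{\partial}$ operator is surjective for every $\omega$-tame $J$; thus the locus of irreducible subvarieties in $\mathcal{M}_F$ is a smooth $2$-manifold of the expected dimension. By Gromov compactness $\mathcal{M}_F$ is compact, and by the previous paragraph there is a well-defined continuous map $M\to\mathcal{M}_F$ sending each point to the unique subvariety through it, whose generic fibers are embedded spheres. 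This exhibits $M$ as topologically ruled over the leaf space $\mathcal{M}_F$, and comparison with the base of the underlying ruling identifies the compact surface $\mathcal{M}_F$ with $\Sigma_g$.

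\textbf{Reducible fibers and the main obstacle.} A reducible subvariety is a configuration $\sum_i m_i C_i$ with $\sum_i m_i[C_i]=F$ and $\omega(C_i)>0$; since only finitely many homology classes of $\omega$-area at most $\omega(F)$ carry $J$-holomorphic curves, there are finitely many decomposition types, and each component is a negative curve. By Theorem \ref{intro1} the exceptional components are rigid with single-point moduli, so each reducible configuration is isolated; as $\mathcal{M}_F$ is a compact $2$-manifold in which these configurations form a codimension-$\geq 2$ subset, they are finite in number. I expect the main obstacle to be making the everywhere-existence and the exact count uniform over all (in particular non-generic) tame $J$: positivity handles uniqueness cleanly, but existence at the special points and the correct behaviour of the Gromov--Taubes count near the $b^+=1$ wall require care, and ruling out \emph{continuous} families of reducible fibers is precisely where the rigidity of the exceptional and other negative components, via Theorem \ref{intro1}, is indispensable.
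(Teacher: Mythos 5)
First, a point of comparison: the paper does not prove this statement at all. It is quoted verbatim as Theorem 1.2 of \cite{Zhang16} and used as a black box, so there is no internal argument to measure yours against; the relevant benchmark is Zhang's proof. Your sketch follows the standard route (a deformation-invariant count of $1$ computed in the split integrable model, Gromov compactness to reach non-generic points, adjunction to force embedded spheres, automatic transversality for the square-zero normal bundle, positivity of intersections for uniqueness), and this is indeed the spirit of Zhang's argument and of McDuff's earlier work; most of it is sound, and your dimension bookkeeping is correct.

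There are two places where the argument as written has a genuine gap. The first is uniqueness through an arbitrary point. Positivity of intersections disposes of two distinct \emph{irreducible} representatives of $F$, but for two distinct reducible configurations through the same point the step you defer --- ``once one checks they cannot share a common irreducible component'' --- is exactly the hard part. If $S$ denotes the common part, then after removing it you are comparing two component-disjoint subvarieties in class $F-S$; since every component $C_i$ of a fiber satisfies $F\cdot C_i=0$, positivity only yields $(F-S)^2=S^2\ge 0$, which is not an immediate contradiction because $S$ is an effective sum of negative rational curves whose square need not be negative a priori. Closing this requires a Zariski-type lemma: the intersection form restricted to the components of a connected fiber is negative on every proper effective subsum, which in turn uses the connectedness of fibers and the structure of the dual graph --- precisely the structure theory Zhang develops. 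The second gap is in the finiteness of reducible fibers: you invoke Theorem \ref{intro1} for rigidity of the components, but that theorem only concerns \emph{exceptional} classes, whereas a reducible fiber can contain components of square $\le -2$ (for instance $F-E_1-E_2$), for which no transversality or rigidity statement is available for a fixed non-generic tame $J$. The fix is elementary and bypasses Theorem \ref{intro1} entirely: two distinct irreducible $J$-curves in the same class $A$ with $A^2<0$ would have homological intersection $A^2<0$, contradicting positivity of intersections, so each negative class carries at most one irreducible representative; combined with the fact that only finitely many classes of $\omega$-area at most $\omega(F)$ admit $J$-holomorphic representatives, this gives finiteness of the possible components and hence of the reducible configurations.
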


Here a rational curve means  that  the domain genus is 0, and by the adjunction formula each irreducible image  component has to be smooth, hence one can pick a simple $J$ holomorphic map into it that is embedded:

\begin{lma}\label{gj=0}
The irreducible components of the stable curves of a Gromov limit for an exceptional curve all have $g_J=0$.

\end{lma}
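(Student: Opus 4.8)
The plan is to combine Zhang's structural result (Theorem \ref{intro1}) with the adjunction formula recalled in Definition \ref{sw}. By Theorem \ref{intro1}, for any $\omega$-tame $J$, every irreducible component of a subvariety in an exceptional class $E$ is a rational curve of negative self-intersection. A Gromov limit of a sequence of $J$-holomorphic curves in the class $E$ is a stable map whose image is precisely such a subvariety, so the content to verify is that the \emph{domain} genus $g_J$ of each irreducible component also vanishes, not merely that the images are rational.

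First I would recall that the image components are smooth. Since each irreducible image component $C_i$ is an embedded rational curve of negative self-intersection, the adjunction formula \eqref{AF} gives $K_\omega \cdot C_i = -C_i\cdot C_i - 2$, which is the arithmetic genus computation certifying that the geometric genus of each image is $0$ and that $C_i$ is smooth. Then I would invoke the fact that a stable map to a smooth embedded rational curve can be taken to have connected rational domain components: because the target component is smooth, the normalization of the image component is $\mathbb{CP}^1$, and the simple $J$-holomorphic map parametrizing it factors through this normalization. Hence the domain genus $g_J$ of each component equals the geometric genus of its image, which is $0$.

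The one point requiring care is the distinction between the domain genus $g_J$ and the arithmetic genus that appears in the virtual dimension bookkeeping of the moduli space $\mathcal{M}_E$. In principle a Gromov limit could carry ghost components or multiply-covered components that raise the total domain genus even when each image is rational. I would rule this out by observing that, by Theorem \ref{intro1}, $\mathcal{M}_E$ is a single point and each component is an embedded negative curve; multiple covers of a negative rational curve would inflate the expected dimension or violate the configuration's intersection constraints, so each stable component is a simple map onto its image. After discarding any ghost bubbles (which carry $g_J=0$ by definition), every surviving component is a simple embedded parametrization of a smooth rational image, so $g_J=0$.

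The main obstacle is precisely this last step: controlling multiply-covered or bubbled components in the Gromov compactification so that the domain genus is not artificially increased. Once one has established that each surviving irreducible component is a simple embedding onto a smooth rational curve — which follows from the smoothness of the images via adjunction together with the rigidity $\mathcal{M}_E = \mathrm{pt}$ from Theorem \ref{intro1} — the conclusion $g_J = 0$ is immediate.
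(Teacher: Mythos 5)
There is a genuine gap, and it starts with the meaning of $g_J$. In this paper $g_J(C_i)$ is the \emph{virtual} (adjunction) genus of the homology class of the component, $g_J(C_i)=\tfrac{C_i\cdot C_i+K\cdot C_i}{2}+1$, not the domain genus; one always has $g_J(C_i)\ge g(\Sigma_i)\ge 0$ with equality in the first inequality precisely when the image is smooth. The whole point of the lemma is to upgrade Zhang's conclusion that each component is \emph{rational} (domain genus $0$) to the statement $g_J(C_i)=0$, which is what later forces the image components to be smooth and lets one choose embedded simple parametrizations (this is how the lemma is used in Lemma \ref{estable2}). Your argument runs backwards at exactly this point: you assert that each image component is an \emph{embedded} rational curve and then apply adjunction \eqref{AF} to conclude $K\cdot C_i=-C_i\cdot C_i-2$, i.e.\ $g_J(C_i)=0$; but Theorem \ref{intro1} gives only rationality and negative self-intersection, not embeddedness, and for a rational component embeddedness is \emph{equivalent} to $g_J(C_i)=0$ --- so you are assuming the conclusion. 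The appeal to $\mathcal{M}_E$ being a single point to exclude multiply covered components also does not work: uniqueness of the subvariety in class $E$ says nothing about the multiplicities $r_i$ in the decomposition $E=\sum_i r_iC_i$, and indeed the paper must treat multiply covered components explicitly later (in Lemma \ref{estable2}); nor do ghost bubbles affect $g_J$, which depends only on the homology classes of the image components.

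The missing idea is to place each component inside a \emph{fiber}. By Corollary 3.8 of \cite{Zhang16}, every irreducible rational component $C_i$ of the stable curve in class $E$ is also an irreducible component of a subvariety in the fiber class $F$. Since $F\cdot F=0$ and $K\cdot F=-2$, one has $g_J(F)=0$, and the genus subadditivity for connected subvarieties (Theorem 1.4 of \cite{LZ15}) gives $0=g_J(F)\ge\sum_i g_J(C_i)$, while $g_J(C_i)\ge 0$ for each irreducible component. Hence every $g_J(C_i)=0$. This is the paper's argument; it requires no a priori embeddedness and handles multiplicities automatically, and only afterwards does one deduce smoothness of the images from $g_J(C_i)=0$ via adjunction.
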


\begin{proof}
 First, by Corollary 3.8 of \cite{Zhang16}, every irreducible rational curve $C_i$ belongs to a fiber in homology class $F$.  This means an irreducible component of the exceptional stable curve of class $E$ is also a component of some curve in class $F$.
 
 Note the the fiber class $F$  pairs non-negatively with any J-holomorphic
subvariety.   Theorem 1.4 of \cite{LZ15}  says that
 $0= g_J(F) \ge \sum_i g_J(C_i) $, and each $g_J(C_i)\ge 0.$  Hence all $g_J(C_i)=0.$
\end{proof}

On the other hand, for the high genus section type curves, Zhang provides the following existing result:

\begin{prp}[Proposition 3.6 in \cite{Zhang16}]\label{smoothsection}For any $(M,\omega)$ a symplectic irrational ruled surface and any  $J$ on it tamed by the form $\omega$ there is an embedded $J$-holomorphic curve $C$ of genus $g$ such that $[C]\cdot F=1$.
\end{prp}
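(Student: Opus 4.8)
The plan is to obtain $C$ as the unique ``horizontal'' component of a $J$-holomorphic subvariety whose existence holds for \emph{every} tamed $J$, and then to force its genus and embeddedness purely from the adjunction formula. Write the ruling as $\pi\colon M\to\Sigma_g$, where the base is identified with the moduli space $\mathcal M_F\cong\Sigma_g$ of fiber subvarieties from Theorem \ref{curvesexist}, so that $\pi$ is $J$-holomorphic. A class meets $F$ once precisely when, in the basis $B,F,E_1,\dots,E_n$, it has the form $A'=B+aF+\sum_i b_iE_i$; by adjunction its arithmetic genus is
\begin{equation*}
p_a(A')=g-\tfrac12\sum_i b_i(b_i+1)\le g,
\end{equation*}
with equality exactly when every $b_i\in\{0,-1\}$. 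This elementary inequality, valid for any class meeting $F$ once, is what will pin down the genus.

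For existence I would fix $A=B+kF$ with $k$ large, so that $A\cdot F=1$, $p_a(A)=g$, and the Taubes dimension $d(A)=\tfrac12(A\cdot A-K_\omega\cdot A)=2k-g+1$ is positive. Since the Gromov invariant is a deformation invariant of the tamed pair, I would evaluate $\Gr(A)$ on the integrable structure $J_{std}$, where curves in class $A$ form a family of the expected dimension $d(A)$ — the linear system of dimension $2k-2g+1$ together with the $g$-dimensional Picard torus of degree-$k$ line bundles on $\Sigma_g$ — and the count through $d(A)$ generic points is a nonzero enumerative number by the Seiberg--Witten computation for ruled surfaces and Taubes' $SW=Gr$ (cf. \cite{LL01}). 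Nonvanishing of $\Gr(A)$ produces, for every $\omega$-tamed $J$, a $J$-holomorphic subvariety $Z$ in class $A$.

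I would then decompose $Z=\sum_i m_iZ_i$. Positivity of intersections gives $Z_i\cdot F\ge 0$, and since $\sum_i m_i(Z_i\cdot F)=A\cdot F=1$, exactly one component $Z_0$ is horizontal, with $m_0=1$ and $Z_0\cdot F=1$; every remaining component is vertical, contained in a fiber (and rational by Lemma \ref{gj=0} and Theorem \ref{intro1}). The projection $\pi|_{Z_0}\colon Z_0\to\Sigma_g$ has degree $Z_0\cdot F=1$, so the composite of the normalization with $\pi$ is a degree-one holomorphic map of smooth curves onto $\Sigma_g$, hence an isomorphism; thus the geometric genus $p_g(Z_0)$ equals $g$. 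On the other hand $[Z_0]=A-\sum_{i\neq 0}m_i[Z_i]$ still meets $F$ once, so the displayed inequality gives $p_a([Z_0])\le g$. The adjunction formula for the irreducible curve $Z_0$ reads $p_a([Z_0])=p_g(Z_0)+\delta$ with $\delta\ge 0$ the total contribution of its singular points; combining, $g+\delta=p_a([Z_0])\le g$ forces $\delta=0$. Hence $Z_0$ is embedded of genus $g$, and $C=Z_0$ is the required curve.

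The step I expect to be the crux is the existence for \emph{all} tamed $J$: this is exactly the kind of statement that cannot be produced by hand and rests on the deformation invariance of the Gromov invariant together with its nonvanishing on the integrable model, so the care lies in choosing a class whose Gromov invariant is visibly nonzero (here the large-$k$ section class) and in checking that the invariant is the relevant one in the chamber determined by $[\omega]$. Once a subvariety in such a class is available, the positivity-plus-adjunction argument above is robust and, notably, delivers embeddedness without any genericity assumption on $J$.
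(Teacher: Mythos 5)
The paper offers no proof of this statement: it is imported verbatim as Proposition 3.6 of \cite{Zhang16}, so there is no in-house argument to measure yours against. That said, your reconstruction is essentially the standard proof of this fact (and, in mechanism, essentially Zhang's): produce a subvariety in a section-type class for \emph{every} tamed $J$ from a nonvanishing Gromov--Taubes invariant, isolate the unique horizontal component by positivity of intersection with $F$, and then squeeze its genus between the lower bound coming from the degree-one projection to the base and the adjunction upper bound $p_a\le g$, which simultaneously forces embeddedness. The computation $p_a(B+aF+\sum_i b_iE_i)=g-\tfrac12\sum_i b_i(b_i+1)\le g$ and the multiplicity argument giving $m_0=1$, $Z_0\cdot F=1$ are both correct. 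Two points deserve care. First, the nonvanishing of $\Gr(B+kF)$ is genuinely the crux, and on a $b^+=1$ manifold with $b_1=2g>0$ it is not a bare enumerative count on the integrable model: one needs the Li--Liu wall-crossing formula together with the vanishing of the Seiberg--Witten invariant in the positive-scalar-curvature chamber, plus the check that $(K-A)\cdot\omega<0$ for $k$ large so that the Taubes chamber is the relevant one; you flag exactly this dependence on \cite{LL01}, so I read it as a correctly attributed citation rather than a gap. Second, the phrase ``degree-one holomorphic map of smooth curves onto $\Sigma_g$, hence an isomorphism'' presupposes that the projection $M\to\mathcal M_F$ is holomorphic for the given $J$, which is more than the paper's quotation of Theorem \ref{curvesexist} supplies (it only asserts a homeomorphism $\mathcal M_F\cong\Sigma_g$); this is harmless, however, since a continuous degree-one map of closed oriented surfaces is already surjective on first homology, so $p_g(Z_0)\ge g$ holds for purely topological reasons, which is all your squeeze requires.
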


This grants that if $J$ tames a symplectic form in class $u= (\mu,1,c_1, \ldots c_n) $  then there is an embedded curve in the class  ${B+kF-\sum_i c_iE_i}$ for some $k \leq g,$ as long as the curve has positive area.

\subsection{Stratification of $\mA_{u}$}\label{strA}

In the current paper, we are not able to guarantee the existence of enough almost holomorphic curves for every $J$ to establish full stability. As we will see below, the deficiency stems from the lack of embedded exceptional curves for {\it every} $J$ to guarantee inflation within the chambers of the symplectic cone.  However,  embedded or mildly degenerated exceptional $J$- curves are provided for each $J$ in the codimension 2 strata and the open stratum.

Nevertheless, for equal size $\frac12$ blowups (see section \ref{1/2}), we are able to show that the above-mentioned types of strata along with codimension higher than $4$ strata corresponding to negative index embedded curves in classes $B+kF, k<g$, completely cover $\mA_u$. This allows us to conclude that our current inflation results are sufficient to prove full stability in such deformations.

For any almost complex structure $J$ and any exceptional homology class $E$, we know there exists a  $J$ holomorphic curve in the class $E$. Based on this we make the following definition:

\begin{dfn}
For any $J$ and an exceptional class $E$ in a ruled surface, we call the J-holomorphic exceptional curve in class $E$

\begin{itemize}
    \item {\bf mildly degenerated} if its J-representative has exactly two rational embedded components in classes $\{C, D\}$, where $D$ has square $-2$ and $C$ is an exceptional class with $C^2=-1$, and $C \cdot E=0.$

    \item {\bf badly degenerated} if its J-representative either has at least two components with a simple rational representative of square at most $-2$, or has at least one with simple rational representative of square less than $-2$.
\end{itemize}

\end{dfn}
For the mildly degenerated curves, we extend the definition of driven sets   
  $$\mU_{u, C,D}:=\{ J\in \mA_{u}|   \hbox{ J admits a mildly degenerated exceptional curve  of type \{C, D\}} \}$$
  
  This is a codimension 2 suborbifold by  Proposition \ref{stratum}.
  
The following arguments imply that exceptional class curves of type $E_i$ or $F-E_j$, as well as their specific degenerations $\{C, D \}$, are determined by homological data only.   For the rest of the almost complex structures, we will show that they are included in  Fr\'echet orbifolds with codimension 4 or higher.

\begin{lma}\label{estable2}
Consider any exceptional homology class $E$ and any $J \in \mA_u$  on an irrational ruled surface.
One of the following three situations take place:

\begin{itemize}
    \item The class $E$ has an embedded rational $J$ -curve representative.
    \item The stable $J$ holomorphic curve in class $E$ is mildly degenerated.

    \item The stable J- holomorphic curve is badly degenerated.
\end{itemize}

\end{lma}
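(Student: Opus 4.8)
The plan is to combine Zhang's structure theorem for the exceptional subvariety with a purely homological count governed by the two invariants $E^2=-1$ and $K_{\omega}\cdot E=-1$. By Theorem \ref{intro1} the class $E$ carries a unique subvariety whose irreducible components are rational curves of negative self-intersection, and by Lemma \ref{gj=0} each component $C_i$ has $g_J(C_i)=0$; via Corollary 3.8 of \cite{Zhang16} each $C_i$ is then a component of some fiber in class $F$. Since the subvariety in an exceptional class is connected (being a Gromov limit of embedded spheres) while distinct fibers are disjoint, I would first argue that all the components lie inside a single fiber. Because $p_a(F)=0$, that fiber is a tree of embedded rational curves meeting transversally, so any two distinct components satisfy $C_i\cdot C_j\le 1$. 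This geometric bound, rather than bookkeeping alone, is what will close the trichotomy.

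Writing $E=\sum_i m_i[C_i]$ with $m_i\ge 1$ and $a_i:=-C_i^2\ge 1$, adjunction on each smooth rational component gives $K_{\omega}\cdot C_i=a_i-2$, so that
\[
  K_{\omega}\cdot E=\sum_i m_i(a_i-2)=-1 .
\]
First I would isolate any component of square $\le -3$: such a term contributes $m_i(a_i-2)\ge 1$ to the sum, so its presence already exhibits a simple rational representative of square $<-2$, i.e. the badly degenerated case. I then assume every component has square $-1$ or $-2$. The square-$(-2)$ components contribute $0$ to the identity, which forces the total multiplicity of square-$(-1)$ components to be exactly $1$: there is a single such component $C$, occurring once.

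It remains to read off the square-$(-2)$ part. If there are none, then $E=C$ with $C^2=-1$ is embedded. If there are at least two, the subvariety has two components of square $\le -2$ and is badly degenerated. The delicate case is a single square-$(-2)$ component $D$ of multiplicity $m_D$, so that $E=C+m_D D$; expanding $E^2=-1$ yields $C\cdot D=m_D$, and then the fiber-tree bound $C\cdot D\le 1$ forces $m_D=1$. In that case $E=C+D$ with $C\cdot D=1$ and $C\cdot E=C^2+C\cdot D=0$, which is exactly the mildly degenerated configuration. The hard part of the argument is precisely this last exclusion — ruling out a lone $-2$ component of higher multiplicity, which would otherwise slip through all three alternatives — and it is here that the normal-crossings (genus-$0$ fiber) structure is indispensable. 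Throughout I would keep the reduced image components distinct from the subvariety multiplicities, since the statement of the lemma is phrased in terms of the former.
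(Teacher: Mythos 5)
Your proposal is correct in substance and shares the paper's skeleton (Zhang's structure theorem, $g_J(C_i)=0$ for every component, and the bookkeeping with $E^2=-1$ and $K_\omega\cdot E=-1$ via adjunction), but it resolves the delicate configurations by a genuinely different mechanism. The paper never invokes the fact that all components sit inside one fiber; instead it disposes of the leftover cases --- a multiply covered component, a component of square $\le -3$, or two or more $-2$ components --- by computing the index of the underlying simple representatives via the Ivashkovich--Shevchishin formula \eqref{indA} and showing the corresponding $J$ lands in a stratum of codimension $\ge 4$. That extra work is not gratuitous: it is exactly what feeds Lemma \ref{strsep} and Table \ref{partable} later. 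Your route --- connectedness of the Gromov limit forces all components into a single fiber, and $p_a(F)=0$ forces that fiber to be a tree with pairwise intersections $C_i\cdot C_j\le 1$, which kills the configuration $E=C+m_D D$ with $m_D\ge 2$ via $C\cdot D=m_D$ --- is more direct for the bare trichotomy and, notably, closes a case the paper's write-up treats only implicitly (the proof there writes $E=C+\sum D_i+\sum P_j$ with the $-2$ component appearing with multiplicity one). The one step you should make airtight is the tree bound itself: the arithmetic-genus count $p_a=1-N+\delta$ gives $C_i\cdot C_j\le 1$ cleanly only for \emph{reduced} fibers, so for a possibly non-reduced subvariety in class $F$ you need the precise genus inequality for $J$-holomorphic subvarieties (Theorem 1.4 of \cite{LZ15}, with its intersection and multiplicity contributions) or Zhang's structure theorem for reducible fibers, rather than the bare inequality $0\ge\sum g_J(C_i)$ quoted in Lemma \ref{gj=0}. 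With that reference supplied, your argument proves the lemma as stated; what it does not supply, and the paper's version does, is the codimension information about where the badly degenerated $J$ live, which is needed downstream.
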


\begin{proof}
  Consider the stable curve in an exceptional class, $E=\sum_i r_iC_i$ where each $C_i$ is possibly multiple covered.  Let $g_J(A)$ be the virtual genus of class $A$, given by $\frac{A\cdot A +K\cdot A}{2}+1$.   By Lemma \ref{gj=0}, $0=g_J(E) \ge \sum_i r_i g_J(C_i) \ge 0$.  Hence $ g_J(C_i)=0$ for each $C_i$.   Then let $g(\Sigma_i)$ denote the domain genus of the corresponding map into $C_i$, we always have $ g_J(C_i)\ge g(\Sigma_i) $ where $``="$ holds iff the image almost holomorhic curve $C_i$ is smooth.
  That implies that we can choose a simple embedded  $J$ holomorphic map with the image $C_i$ for each $i$  since indeed $g_J(C_i)=0=g(\Sigma_i)\ge 0$.
  
  First, let us consider the set  $\mA_E$ consisting of all $J_0$ admitting embedded exceptional curves. This is an open subset since the curve is stable and for $J$ sufficiently close to $J_0$ we must also have embeddedness of the exceptional curve.

  Let us now look at the rest of $A_u$, consisting of $J$ with degenerated exceptional curves.
  
  Then by the connectedness of the stable image curve, there must be at least one embedded rational component with sufficiently negative self-intersection $-k$, $k\geq 2$. 
  We will be using the virtual dimension computation   (see Theorem 1.6.2 of \cite{IS99} by Ivashkovich-Shevchishin) for underlying simple representatives.
  Namely, for a simple class $A$ admitting an embedded $J$ holomorphic representative and for $K$ the canonical class, index of the curve representing $A$ is  given by 
  
  \begin{equation}\label{indA}
     index(A)= 2g-2-2K\cdot A.  
  \end{equation}

 If there is only one such negative component with square strictly less than $-1$ which is a simple class with self-intersection exactly -2, then the degeneration has to be of $\{C, D\}$ type by computing the square and pairing with the canonical class $K$.
      More precisely, assume $E=C+\sum D_i+\sum P_j,$ so that $C^2=-2, D^2_i\ge -1, P^2_j\ge 0.$   By $g_J(C)=g_J(D_i)=0,$ we have $K\cdot  C=0, K\cdot D_i=-1$ and $K\cdot P_J>-1$.   Also, we have $ K \cdot (C+\sum D_i+\sum P_j) =K\cdot E=-1.$  Hence there can only be precisely one $D_i$.  Furthermore  since the  component  $C$ is embedded then all $J$ admitting  such curve with such particular component   belong to the Cod=2 stratum (see the results in Section 2) and \eqref{indA}.
  
 To see $C\cdot E=0$   when $E=C+D$,  one only need to note the following:   $-1=E^2= (C+D)(C+D)=  C(C+D) +D(C+D) =C\cdot E +  1-2.$   Hence $C\cdot E$ has to be $0.$

 On the other hand, if there is either (a) at least one component of negative self-intersection less or equal to $-3$ or (b) more than one component of self-intersection $-2$ we argue by Ivashkovich-Shevchishin's index formula \eqref{indA} and transversality of strata that the corresponding $J$ belong to higher codimension strata detailed in Sec \ref{cone}.

    In the first case (a), since  there are irreducible components with square less or equalthan $-3$, we write 
 $E=C_1 +\sum_{i>1} C_i,$    such that $C_1^2<-2.$  If $C_1$ has a simple representative, then we are done.  Now let's deal with the case $C_1$ is multiple covered.  Let $C_1=m C'_1, m>1$ such that $C'_1$ has a simple representative.   Notice that we immediately know that  $ (C'_1)^2\le -1.$

      Then we have $0= 2 g_J (C_1) = 2+ m^2 (C'_1)^2 +m K_J \cdot C'_1.$  This  means that  $$K_J \cdot C'_1 =\frac{-2-m^2 (C'_1)^2}{m}>0.$$
Hence $C_1'$ has an index less than -2. This means that $J$ belongs to a stratum of codimension greater than 2 in $\mA_{u}.$

If there are more than one components with square   $-2$ let's write    $E=C_1 +C_2 +\sum_{i>2} C_i,$    such that $C_1^2=C^2_2=-2.$   If both of them have  simple representatives, then we are done.  Now lets assume some of them are multiple covered, i.e.  $C_1=p C'_1,  C^2= q C'_2, p, q \ge 1.$   Now we still have $0= 2 g_J (C_1) = 2+ p^2 (C'_1)^2 +p K_J \cdot C'_1.$  This  means that  $$K_J \cdot C'_1 =\frac{-2-q^2 (C'_1)^2}{q}>0.$$  Similarly, the index of $C'_2$ is also non-positive.  Hence both simple representatives has non positive indices. By the transversality of the simple representatives,  $J$ belongs to a strata of  codimension greater than 2 in  $\mA_{u}.$
 \end{proof}

The previous lemma yields three-partitions of the space $\mA_ u$ that are $E$ dependent. We will use   refinements of  these partitions by intersections with strata given by section type classes as well and as a consequence give a stratification $\mA_{u}$ as follows:
\begin{dfn}\label{subsetA}
The following is a stratification of  $\mA_{u}$ validated by the Lemma \ref{strsep} below.
\begin{itemize}

\item We call  $\mA^{top}_{u}$ to be the collection of $J\in \mA_{u}$ characterized by
 \begin{itemize}
\item [(a)]  there are no   J-holomorphic curves in $B+kF-\sum E_i$, index less or equal -2, and

\item [(b)] an embedded exceptional curve in each class of type $E_i$ or $F-E_i$.

\end{itemize}

$$  \mA^{top}_u = (\underset{E' \in \mE}{\cap} \mA_{E'})  \setminus ( \underset{ind (B+mF-\sum E_i) \le -2}{\cup}\mA_{ B+mF-\sum E_i})  $$ where 
$ B+mF-\sum E_i$ has index no larger than -2.

\item $\mA^{2}_{u}$, is the disjoint union of  codimension two strata that could be of one of the following types:
\begin{itemize}
    \item Degeneration type, forming the collection $\mA_{u}^{mild} $. For 
    a given degeneration  $C,D$  of a given exceptional class classes $E$, we introduce:
    
     $\mA^{2}_{u,C,D}:=(\mU^{2}_{u, C, D} \cap (\underset{E' \ne E}{\cap} \mA_{E'}) )\setminus ( \underset{ind (B+mF-\sum E_i) \le -2}{\cup}\mA_{ B+mF-\sum E_i})  $ where $E'$ is an exceptional class. In words each $J\in\mA^{2}_{u,C,D}$   is characterized by 
     \begin{itemize}
         \item [(a)]there are no   J-holomorphic curves in $B+kF-\sum E_i$, index less or equal -2, and 
     \item [(b)] the existence of exactly one exceptional class having a stable $J$ representative of homology type $\{C, D\}$, s.t. $C^2=-2, D^2=-1$ as in Lemma \ref{estable2} and 
     \item[(c)] all other exceptional classes  of type $E_i$ or $F-E_i$ are represented by embedded J-holomorphic curves.
     \end{itemize}

    \item  The second collection (possible) $\mA^{2}_{u, B}$ is a union of sets obtained as follow.  Pick a class  $B'=B+kF-\sum E_i$  with index $-4k +2g-2 +2l=-2$ where $l$ is the number of $E_i$'s.  The  typical strata given by this $B'$ inside the larger collection is

   $\mA^{2}_{u, B'}:= \mA_{u,  B'}\cap (\underset{E' \in \mE}{\cap} \mA_{E'})  \setminus ( \underset{ B+mF-\sum E_i \ne B'}{\cup} \mA_{ u, B+mF-\sum E_i,  ind \leq-2}).$
     
     In words, this set is characterized by  $J$ for which there exist 
     \begin{itemize}
     \item [(a)] one embedded section in class $B'$ \item[(b)]
     no other embedded representative sections of index less or equal than $-2$  and \item[(c)]
     an embedded exceptional curve in each class of type $E_i$ or $F-E_i$. 
     \end{itemize}

\end{itemize}

\item  Finally, $  \mA^{high}_{u} := \mA_{u}\setminus (\mA^{top}_u \cup \mA^{2}_{u}) $
is the complement of the above two subsets.

\end{itemize}

\end{dfn}

 Notice that Lemma \ref{type2} will guarantee the existence of the strata $\mA^{2}_{u, B}$.

The following table summarizes this definition and is explained in Lemma \ref{strsep}.

\begin{table}[ht]
\begin{center}
\resizebox{\textwidth}{!}{
  \renewcommand{\arraystretch}{2}
 \begin{tabular}{||c c c c ||}
\hline\hline
section class  &   embedded exceptional       &
mildly deg& deg\\[0.5ex]
\hline\hline
No $B+mF-\sum E_i, ind \le -2$  &         $\mA^{top}_{u}$   &
$\mA_{u}^{mild}$             &
$\mA^{high}_{u}$ \\[0.5ex]
\hline
  $B+kF-\sum E_i, ind =-2$   &     $\mA^{2}_{u, B}$ &
$\mA^{high}_{u}$ &   $\mA^{high}_{u}$    \\[0.5ex]
\hline
  $B+kF-\sum E_i, ind <-2$   &     $\mA^{high}_{u}$ &
$\mA^{high}_{u}$ &   $\mA^{high}_{u}$    \\[0.5ex]
\hline\hline
\end{tabular} }
\caption{Partition}\label{partable}
\end{center}
\end{table}

\vspace{10mm}
\smallskip

The following Lemma shows that the  Definition \ref{strsep} is indeed correct and it further clarifies what type of almost complex structures fall in the third set  $\mA^{high}_u$.

\begin{lma}\label{strsep}
$\mA_u$ is the disjoint union of the above 3 parts: 
\begin{enumerate}
    \item $\mA^{top}_{u}$ is an open subset of  Cod=0 in $\mA_u$,
 \item  $\mA^{2}_{u}$  has  $Cod=2$  in $\mA_u$,
 \item $\mA^{high}_u$  is the union (not necessarily disjoint) of (a) strata of $Cod>2$ corresponding to either homology classes of type $B+kF-\sum E_i,$  or to homology classes of square strictly less than -2 of rational curves that appear as a degeneration of the exceptional class or (b) transverse intersections of certain  strata of codimension $2$.
\end{enumerate}

\end{lma}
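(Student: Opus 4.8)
The plan is to take the covering property for granted---since $\mA^{high}_u$ is \emph{defined} as $\mA_u\setminus(\mA^{top}_u\cup\mA^2_u)$, the three sets tautologically cover $\mA_u$---and to reduce the lemma to four checks: disjointness of the three pieces, openness (codimension $0$) of $\mA^{top}_u$, the codimension-$2$ property of each piece of $\mA^2_u$, and a case analysis identifying the remaining $J$ with the higher-codimension strata of part (3). The engine throughout is Lemma \ref{estable2}, which for a fixed exceptional class and \emph{every} $J$ sorts the behaviour into embedded, mildly degenerate, and badly degenerate, together with Proposition \ref{stratum}, which computes the codimension of a stratum cut out by a prescribed embedded configuration.

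For disjointness I would simply compare defining conditions. Membership in $\mA^{top}_u$ forces every exceptional class of type $E_i$, $F-E_j$ to be embedded and forbids any section class $B+kF-\sum E_i$ of index $\le -2$; each mild stratum $\mA^2_{u,C,D}$ instead requires exactly one exceptional class to fail to be embedded, and each section stratum $\mA^2_{u,B'}$ requires the index-$(-2)$ class $B'$ to be represented. Hence $\mA^{top}_u\cap\mA^2_u=\emptyset$. Inside $\mA^2_u$, distinct mild strata are disjoint because Lemma \ref{estable2} shows the degenerating exceptional class and its decomposition type $\{C,D\}$ are determined by homology alone, while the mild strata are disjoint from the $\mA^2_{u,B'}$ because the latter keep all exceptional classes embedded; this is exactly the first row versus second row of Table \ref{partable}.

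For codimension $0$, I would note that the locus where the finitely many relevant exceptional classes $E_i$, $F-E_j$ are all embedded is open, by the stability argument already used in the proof of Lemma \ref{estable2}. It remains to see that the removed set $\cup\,\mA_{B+mF-\sum E_i}$ (index $\le -2$) is closed: by Gromov compactness the set of $J$ carrying a subvariety in a fixed class is closed, and for a fixed $u$ only finitely many such section classes have positive $\omega$-area and index $\le -2$ (the area $\mu+k-\sum_i c_i>0$ together with the index bound confines $k$, hence the class, to a finite range). Thus $\mA^{top}_u$ is open. For codimension $2$, each mild stratum is the intersection of the mild-degeneration locus $\mU_{u,C,D}$---of codimension $2$ by Proposition \ref{stratum}, the $(-2)$-curve $C$ contributing $\cod_C=2$ and the exceptional $D$ contributing $\cod_D=0$---with the open ``all other exceptionals embedded'' condition, minus the closed negative-section locus; it is therefore an open subset of a codimension-$2$ suborbifold. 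Likewise $\mA^2_{u,B'}$ is an open subset of the codimension-$2$ suborbifold $\mA_{u,B'}$, the class $B'$ having index $-2$.

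Finally, part (3) would follow by running the trichotomy of Lemma \ref{estable2} on any $J\notin\mA^{top}_u\cup\mA^2_u$: such a $J$ either represents a section class of index $<-2$ (a codimension $>2$ stratum), or carries a badly degenerate exceptional curve, which by the index estimate \eqref{indA} used in Lemma \ref{estable2} forces a rational component of square $\le -3$ or at least two square-$(-2)$ components and hence again a codimension $>2$ stratum, or it satisfies a section condition of index $-2$ \emph{simultaneously} with a degeneration condition, placing it in the transverse intersection of two codimension-$2$ strata---the off-diagonal entries of Table \ref{partable}. I expect the main obstacle to be precisely the transversality needed to certify that these simultaneous codimension-$2$ conditions really cut out codimension $\ge 4$ (rather than collapsing), and, in tandem, the careful application of Gromov compactness guaranteeing the negative-section loci are genuinely closed so that $\mA^{top}_u$ is open; the combinatorial bookkeeping of which classes $B+kF-\sum E_i$ can be simultaneously represented for a fixed $u$ is what ensures the stratification is locally finite and the list in part (3) exhaustive.
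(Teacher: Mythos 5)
Your proposal is correct and follows essentially the same route as the paper's proof: finiteness of the relevant classes by the area argument, openness of $\mA^{top}_u$, the codimension counts via Proposition \ref{stratum}, and the case analysis of Table \ref{partable} driven by Lemma \ref{estable2} to place the remaining $J$ in codimension $\ge 4$ strata or transverse intersections of codimension-$2$ strata. You are in fact somewhat more explicit than the paper on two points the paper leaves terse --- the closedness of the negative-section loci (via Gromov compactness) needed for openness of $\mA^{top}_u$, and the disjointness of the pieces --- and you correctly flag the transversality of intersecting codimension-$2$ strata as the step that both arguments assert rather than verify.
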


\begin{proof}

A simple area argument shows that only finitely many open set $\mA_{u,E}$  appear in the Definition \ref{subsetA} and for a given $u$ only finitely many closed sets appear in the unions  $\cup_i\mA_{u, B+mF-\sum E_i,  ind \leq-2}$. This yields part (1) immediately.
By Lemma \ref{smoothsection}, there is always an embedded curve in the section class. 
The proof of part $(3)$ part relies on the way we define the set $\mA^{high}_u$ (see the table \ref{partable}).

Indeed for a $J$ in either one of the sets in the bottom row of the table, there is an embedded $J$-holomorphic curve in the class $B+kF-\sum E_i$, with the index   $-4k +2g-2 +2l$ at most -4. Then $J$ belongs in some  stratum  $\mA_{u,B+kF-\sum E_i}$ of codimension at least $4$  generated by a section class.

Secondly, if a $J$ is part of $\mA^{high}_u$ by being one of the two sets in the second row of the table, then $J$  belongs to a transversal intersection of strata of codimension $2$ given by either two different  different  section type classes with index $-2$  each or one section type class and one mildly degenerated type $C,D$ exceptional curve.

If $J$ admits at least one badly degenerated exceptional curve or at least two mildly degenerated ones, they provide strata (or transverse intersection of such) of codimension at least $4$. Note here by Lemma  \ref{estable2}, we have at least one embedded component of an index at most $-4$ or at least two components of index $-2$.

Lastly, any stratum in Part $(2)$ has correct codimension in $\mA_u$, by the index computation for an embedded $(-2)$ sphere or by the existence of a section class in the correct codimension.

Indeed, if $J$ is in $\mA_u^2$,  either   there is exactly one exceptional curve $E$ breaks into $C+D$ where $C$ is the unique -2 curve and $D$ is another exceptional curve (and no embedded section classes of large codimension),  or there is an embedded curve in the class  $B+kF-\sum E_i$ of index -2.

\end{proof}

\begin{rmk}
\begin{itemize}
    \item  Lemma \ref{strsep}  states that $\mA^{high}_u$ is  a union of strata of Fr\'echet suborbifolds with codimension 4 or higher.
    \item  The changes in $\mA^{high}_u$ as $\omega $ varies are not well understood for general deformations as one does not have enough curves to perform inflation to reposition {\bf within} a given chamber. This is the main impediment to establishing the full stability conjecture in general case.
    \item  However,  $\mA^{high}_u$ is  well understood in the  $(\mu, 1, \frac12, \cdots \frac12)$ case since in this cases the exceptional curves cannot degenerate for homological reasons.
\end{itemize}

\end{rmk}

\subsection{Stability of $Symp(M,\w)$ and inflation}

 Firstly, note that one can always inflate along the curve in the class $F$, we have the following Lemma which allows us the find an inclusion between different $[\w]$.

\begin{lma}\label{incup}
For any stratum, including the open strata,   $\mA_{u, \mC}\subset \mA_{u', \mC}$, $u=(\mu,1,c), u'=(\mu+\eps,1,c)$,  and for all $\mu >1, \eps > 0$.

 \end{lma}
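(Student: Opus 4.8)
The plan is to inflate along the fiber class $F$. Given $J\in\mA_{u,\mC}$, by definition $J$ lies in the canonical component $\mA_u$, hence is compatible with some symplectic form $\w$ with $[\w]=u$, and it admits an embedded $J$-holomorphic representative of every class in $\mC$. By Theorem~\ref{curvesexist} this $J$ carries a $J$-holomorphic subvariety in the fiber class $F$, and since $F\cdot F=0$ the inflation theorem recalled above applies with $Z=F$ over the unbounded parameter range $t\in[0,\infty)$.

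First I would record the cohomological effect of the inflation. Since $F\cdot B=1$, $F\cdot F=0$ and $F\cdot E_i=0$, the class $PD(F)$ evaluates on the standard basis $B,F,E_1,\dots,E_n$ as $(1,0,0,\dots,0)$, so that $[\w]+t\,PD(F)=(\mu+t,1,c_1,\dots,c_n)$. Taking $t=\eps$ produces a symplectic form $\w'$ taming $J$ with $[\w']=u'=(\mu+\eps,1,c_1,\dots,c_n)$; this is legitimate for every $\eps>0$ precisely because the range is unbounded. To promote $\w'$ from a taming to a compatible form, I would invoke the $b^+=1$ $J$-compatible inflation: using the comparison of the tame and compatible cones of \cite{LZ11} one obtains a form compatible with $J$ in the same class $u'$, and by the component-tracking of \cite{BL1} this process does not change connected components, so $J\in\mA_{u'}$.

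It then remains to verify that the labelling survives, i.e.\ $J\in\mA_{u',\mC}$. Each $A\in\mC$ is represented by an embedded $J$-holomorphic curve, and this is a condition on $J$ alone that is untouched by the change of symplectic form, so the representatives persist; the only thing to check is that $A$ still belongs to $\mS^{<0}_{u'}$. Its self-intersection and genus, hence its codimension, are topological and unchanged, while positivity of area follows from $u'\cdot A=u\cdot A+\eps\,(F\cdot A)\ge u\cdot A>0$, using that the fiber class pairs non-negatively with any $J$-holomorphic subvariety (as noted in the proof of Lemma~\ref{gj=0}). Hence $A\in\mS^{<0}_{u'}$ and its embedded representative certifies $J\in\mA_{u',\mC}$. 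The degenerate case $\mC=\emptyset$ gives the open-stratum inclusion $\mA_u\subset\mA_{u'}$ by the identical argument.

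The one genuinely delicate point, rather than a true obstacle, is the passage from a taming form to a compatible one while remaining inside the distinguished connected component of $\mA_{u'}$; this is exactly what the $b^+=1$ $J$-compatible inflation together with the isotopy-class bookkeeping of \cite{BL1} are designed to guarantee. Everything else reduces to the elementary intersection computation above and the non-negativity of fiber-class pairings on a ruled surface.
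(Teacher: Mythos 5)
Your proof is correct and follows essentially the same route as the paper's: inflate along the $J$-holomorphic fiber class $F$ (whose existence for every tamed $J$ is Zhang's theorem), note that $PD(F)$ adds $(t,0,\dots,0)$ in the basis $B,F,E_1,\dots,E_n$, and use the unbounded inflation range since $F\cdot F=0$. The paper's own proof is terser and leaves implicit the tame-to-compatible passage and the persistence of the $\mC$-labelling, both of which you spell out correctly.
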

\begin{proof}
  By \cite{Zhang16} Theorem 1.6, we known that for each $J\in
\mA_{u, \mC}$, through each point of $M$ there is a stable $J$-holomorphic sphere representing the fiber class $F = [{\rm pt} \times S^2]$.

  Then we can inflate along the embedded curve $F$. Let us start with $u=(\mu, 1, c).$

 Inflating, we obtain a form in $t P.D[F] +(\mu, 1, c) $= $(\mu+t, 1, c)$, for all $t \in [0,\infty).$

\end{proof}

\begin{prp}\label{inflation}
In the following cases, for  $\mu>g$ the strata have inclusion relations:
\begin{enumerate}

\item    $\mA^{top}_{u}\supset \mA^{top}_{u'}$, where $u=(\mu,1, c_i), u'=(\mu+\eps,1,c_i)$,  and for all $\mu >max\{n, g\}, \eps > 0$.

 \item $  \mA^{2}_{u, C, D}   \supset   \mA^{2}_{u', C, D},$   $u=(\mu,1,c_i), u'=(\mu+\eps,1,c_i)$, for all $\mu >1, \eps > 0$.

\item For any  strata   $ \mA^{2}_{u, B+kF-\sum E_i} \subset    \mA^{2}_{u, B}$ where $B+kF-\sum E_i$ has index -2,    $ \mA^{2}_{u, B+kF-\sum E_i, }\supset \mA^{2}_{u', B+kF-\sum E_i}$  , we have $u=(\mu,1,c_i), u'=(\mu+\eps,1,c_i)$, for all $\mu >1, \eps > 0$.
 \end{enumerate}
 \end{prp}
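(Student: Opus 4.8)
The plan is to prove the three inclusions by a single mechanism. In each case I take $J$ in the stratum attached to the larger class $u'=(\mu+\eps,1,c_i)$ and produce two facts: (a) that $J$ is compatible with a form in the smaller class $u$, so that $J\in\mA_u$; and (b) that $J$ meets the curve-conditions cutting out the corresponding $u$-stratum. Part (b) is the soft half. The \emph{positive} conditions --- an embedded curve in each exceptional class $E_i,F-E_i$, or the distinguished mild degeneration $\{C,D\}$, or the embedded section in $B'$ --- are properties of $J$ alone, since the relevant subvarieties are $J$-holomorphic and do not see which taming form is used; they transfer verbatim. The \emph{negative} condition, that $J$ carry no section curve of index $\le-2$, only weakens as $\mu$ decreases: a class must have positive area to be representable and section areas grow with $\mu$, so $\mS^{<0}_u\subset\mS^{<0}_{u'}$ and the forbidden family for $u$ is contained in that for $u'$. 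Hence (b) follows once (a) is known.

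The substance is (a), a \emph{deflation} from $u'$ down to $u=u'-\eps\,PD(F)$; inflation along the fiber (Lemma \ref{incup}) runs only the other way. I would instead show directly that $u$ lies in the $J$-tame cone and then pass to a compatible form in the canonical component via the Li--Zhang comparison \cite{LZ11} together with the component-tracking of \cite{BL1}. Writing $u\cdot Z=u'\cdot Z-\eps\,(F\cdot Z)$ and using that $F$ pairs non-negatively with every irreducible $J$-curve, positivity is immediate for every $Z$ with $F\cdot Z=0$; this accounts for all fiber components and for the exceptional and reduction classes $E_i,\ F-E_i,\ E_i-E_j,\ F-E_i-E_j$, whose $u$- and $u'$-areas coincide. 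The only $Z$ whose area can drop are the horizontal ones. For a degree-one section $Z=B+kF-\sum_{i\in I}E_i$ with $|I|=l$ one computes $Z^2=2k-l$ and $\cod_Z=-4k+2l+2g-2$, so the moment $Z^2<0$ its index $4k-2g+2-2l$ is at most $-2g\le-2$; thus every negative-square section is excluded from $\mA^{top}_{u'}$ by definition, while the non-negative ones retain positive $u$-area since $\mu>\max\{g,n\}$ forces $\mu+k-\sum_{i\in I}c_i>0$.

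For part (2) the two components of the mild degeneration satisfy $F\cdot C=F\cdot D=0$, as $C+D$ is an exceptional class with $F\cdot E=0$, so they add no obstruction and the rest of the argument is identical. For part (3) the distinguished section $B'$ of index $-2$ is horizontal, but the same estimate gives $u\cdot B'=\mu+k-\sum c_i>0$ in the stated range, and the containment $\mA^{2}_{u,B'}\subset\mA^{2}_{u,B}$ is immediate since $\mA^{2}_{u,B}$ is by definition the union of the $\mA^{2}_{u,B'}$ over index $-2$ section classes. The hardest step I anticipate is exactly the deflation (a), where one must know that \emph{every} negative irreducible $J$-curve pairs positively with $u$, and in particular rule out a genuine multisection with $F\cdot Z\ge2$ and $Z^2<0$. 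My plan is to dispatch these using the fibration structure of \cite{Zhang16}: for $J$ in the canonical component the negative irreducible subvarieties are exhausted by components of singular fibers (with $F\cdot Z=0$) and by degree-one sections, while a would-be multisection, via the index formula $index(Z)=2+2Z^2-2g(Z)$ and Proposition \ref{stratum}, would drive $J$ into a higher-codimension stratum, hence into $\mA^{high}_u$, contrary to $J\in\mA^{top}_{u'}$ or $\mA^{2}_{u'}$. Granting this, all negative classes are either fiber-type or the sections already controlled, so $u$ lies in the $J$-tame cone and \cite{LZ11} together with \cite{BL1} place $J$ in the canonical component of $\mA_u$ in the correct isotopy class.
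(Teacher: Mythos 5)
Your reduction of the proposition to step (a) --- producing a $J$-tame (hence compatible) form in the smaller class $u$ --- organizes the problem correctly, and your step (b) observations are fine. But step (a), which you yourself flag as the hard part, is where the argument breaks. You propose to verify $u\cdot Z>0$ on all irreducible negative $J$-curves and then conclude that $u$ lies in the $J$-tame cone ``via \cite{LZ11} together with \cite{BL1}.'' Neither reference gives that implication: \cite{LZ11} compares the tame and compatible cones of a \emph{fixed} $J$ on a $b^+=1$ manifold (a class already tamed admits a compatible representative), and \cite{BL1} tracks isotopy components through inflation. What you would need is a Nakai--Moishezon-type statement that curve-positivity of a class forces membership in the $J$-tame cone, and no such statement is being invoked or proved; in this setting the only way to certify that a class is $J$-tame is to build the form by inflation along actual $J$-holomorphic curves, and there is no ``deflation'' along $-\eps\, PD(F)$ since $-F$ has no representative. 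The paper's proof avoids this entirely: for every $J$ in the relevant strata, Proposition \ref{smoothsection} supplies an embedded section in some class $B+xF-\sum E_i$ with $x\le g$, and one inflates along it and along the exceptional classes $F-E_i$ (or the pair $\{C,D\}$ via Lemma \ref{cod2inf} in case (2)), with weights $t_i=c_i t$ chosen to hold the $c_i$ fixed; after renormalizing the fiber area to $1$ the $\mu$-coordinate of the resulting class is $(tx+\mu+\sum t_i)/(1+t)$, which decreases to $x\le g$ as $t\to\infty$. The decrease of $\mu$ is achieved by honest inflation plus rescaling, not by subtracting a class.

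There is a second, downstream gap: even granting a curve-positivity criterion, your exclusion of negative multisections ($F\cdot Z\ge 2$, $Z^2<0$) is circular. The stratum $\mA^{top}_{u'}$ is defined only by the presence of embedded exceptional curves and the absence of degree-one sections $B+kF-\sum E_i$ of index $\le -2$; a $J$ carrying a negative irreducible multisection is not thereby expelled from $\mA^{top}_{u'}$, because $\mA^{high}_{u'}$ is defined as a complement rather than by an independent curve condition, so no contradiction with $J\in\mA^{top}_{u'}$ follows. The paper never needs to control such classes, since inflation only ever adds positive multiples of Poincar\'e duals of curves that are actually present.
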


\begin{table}[ht]
\begin{center}
\resizebox{\textwidth}{!}{\renewcommand{\arraystretch}{2}\begin{tabular}{||c c c c ||}
\hline\hline
Direction  &  Strata         &
Class to inflate & Proof   \\[0.5ex]
\hline\hline
 In the same chamber &                                                       $\mA_{u, \mC} $    & $B+xF-\sum E_i$,
$E_i$, $F-E_i$                                                 &  Lemma \ref{inflation}            \\[0.5ex]
\hline
Within a chamber &                                                       $\mA^2_{u, C, D} $    & $B+xF-\sum E_i$, {C, D},
$E_i$, $F-E_i$                                                 &  Lemma \ref{cod2inf}            \\[0.5ex]
\hline
     Within a chamber &     $\mA^{top}_{u} $  &
$B+xF-\sum E_i$, $E_i, F-E_i$                                            &  Lemma \ref{inflation}            \\[0.5ex]
\hline
Across to chambers with large $\mu$                                                       &  Any strata     &
$F$                                                      &  Lemma \ref{incup}            \\[0.5ex]
\hline
Across to chambers with small $\mu$                                                        &   $\mA_{u, \mC} $ and  $\mA^{top}_{u}  $    &
$B+xF-\sum E_i$                                                      &  Lemma \ref{inflation}            \\[0.5ex]
\hline\hline
\end{tabular} }
\caption{Inflation process}\label{inftable}
\end{center}
\end{table}

\begin{proof}

First, by Lemma \ref{strsep}, for any of the three types of strata considered in the current Lemma,  there is an embedded curve in class $A=B+xF-\sum E_i$, where $x<g$.

   Note that the process of inflating along this curve will increase the areas of the fiber class $F$ and the $E_j$, if $E_j \cdot A>0$. That means that while we land the desired chamber with a smaller section class $\mu$ we will not be positioned back on the horizontal line considered in the statements. To reposition within this smaller $\mu$ chamber we will use the exceptional classes curves.
   For the sets in points (1) and (3), since all $E_j$'s and $F-E_j$'s are embedded, we can perform simultaneous inflation along collections of such exceptional curves,  decreasing proportionally the area of the curves involved to infinitesimally reach the exterior wall simplices.  On the other hand, for the sets in point (2), we have to explain one additional inflation that sends towards the reduction simplicial wall ( Lemma \ref{cod2inf}.)  Because we have  are in a codimension 2 strata, there is at most one $E_j$ that has a type $\{C, D\}$ degeneration, then by Lemma \ref{cod2inf} we can still inflate along  the pair $C, D$ in such a manner that the homological effect on $
   u$ is the same as if the exceptional curves were embedded.

  More concretely, here is how  we inflate along a curve $B+xF -\sum E_i$.  Let's start with $u=(\mu, 1, c_i).$ By inflating the classes $F-E_i$ and $B+xF-\sum E_i$,  we obtain a form in
   $$t P.D[B+xF-\sum E_i] +(\mu, x, c_i) +t_i(1,0,c_i) = (\mu +tx+ \sum t_i, 1+t , t_i+c_i),$$
  which normalized to $$ \biggl(\frac{tx+\mu +\sum t_i}{1+t}, 1, \frac{c_i+t_i}{1+t}\biggr),$$
  $\forall t \in [0,\infty).$

   Note that we will choose $t_i/t$ proportional to $c_i$, that is, we always take $t_i=c_i t$.  Then in the resulting symplectic class, the area of the exceptional curves is always stable after normalization.
   
   When $B+xF-\sum E_i$ is a negative self intersection curve (note that for the current Lemma that can only happen for codimension 2, $g=1$, and $B+xF-\sum E_i$ has square $-1$), the computation is almost the same except the range of $t$,
   $$t P.D[B+xF-\sum E_i] +(\mu, x, c_i) +t_i(1,0,c_i) = (\mu +tx+ \sum t_i, 1+t , t_i+c_i),$$
  which normalized to $$ \biggl(\frac{tx+\mu +\sum t_i}{1+t}, 1, \frac{c_i+t_i}{1+t}\biggr),$$
  $\forall t \in [0, \w(B+xF-\sum E_i)).$
  
  This also grants that we move to any chamber with pseudo-holomorphic $B+xF-\sum E_i.$ 
   
   By inflating the classes $F-E_i$ and $B+xF-\sum E_i$,  we obtain a form in
   $$t P.D[B+xF-\sum E_i] +(\mu, x, c_i) +t_i(1,0,c_i) = (\mu +tx+ \sum t_i, 1+t , t_i+c_i),$$
  which normalized to $$ \biggl(\frac{tx+\mu +\sum t_i}{1+t}, 1, \frac{c_i+t_i}{1+t}\biggr),$$
  $\forall t \in [0,\infty).$

   Then we just need to make sure that as long as $t\to \infty$,  the resulting symplectic class covers  $\mu \ge g$ cases.  Note $\displaystyle{\lim_{t\to \infty }\frac{tx+\mu\sum t_i }{1+t} }=x\leq g.$   Hence we proved the statement of the Lemma.
 \end{proof}
As explained in the above lemma, repositioning on the lower strata inside a chamber is done by inflating along the embedded exceptional curves; the following deals with that in the case that $J$ admits $\{C, D\}$ type degeneration.

\begin{lma}\label{cod2inf}
  For any $J$ admitting a $\{C, D\}$ type  degeneration of an exceptional class $E$,and for a symplectic form $\omega$ compatible with $J$,  we can inflate  along the two irreducible components repeatedly in an alternate fashion so that we obtain a symplectic form $\w'$ tamed by $J$ and $[\w'] = [\w] +t P.D.(E), 0\le t < \w(E)-\w(C).$  In other word, 	there exists a symplectic form $\w_{\epsilon}$ such that the symplectic area of the exceptional classes $E$ is $\epsilon$ close to the area of $C$, for an arbitrary small $\epsilon$.

\end{lma}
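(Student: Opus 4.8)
The plan is to realize the inflation along $E$ as a bootstrapping (leapfrog) process, alternating the two genuine inflations along the irreducible components $C$ and $D$ and exploiting the fact that inflating along the exceptional component \emph{raises} the symplectic area of the $(-2)$-component, which in turn re-creates room for further inflation along the latter. Since $J$ is held fixed throughout, both $C$ and $D$ stay $J$-holomorphic at every stage, so we may inflate along either of them as many times as needed.

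First I would pin down the homological data. Write the mildly degenerate stable representative as $E = C + D$, where $C$ is the exceptional component with $C\cdot C = -1$ and $C\cdot E = 0$, and $D$ is the $(-2)$-component with $D\cdot D = -2$. From $E\cdot E = -1$ one gets $C\cdot D = 1$, whence $E\cdot C = C\cdot C + C\cdot D = 0$ and $E\cdot D = C\cdot D + D\cdot D = -1$. Consequently, for the target classes $e_t := [\w] + t\,\mathrm{PD}(E)$ one computes $e_t(C) = \w(C)$ (constant in $t$), $e_t(D) = \w(D) - t$, and $e_t(E) = \w(E) - t$. In particular $e_t(E) \to \w(C)$ as $t \to \w(E)-\w(C) = \w(D)$, which is exactly the stated conclusion once each $e_t$ has been realized by a form taming (compatibly with) $J$.

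The heart of the argument is an induction realizing $e_t$ by alternating inflation. Fix a target $t_0 < \w(D)$, choose a large integer $N$, and set $\delta = t_0/N$. I claim that for each $k = 0,1,\dots,N$ the class $e_{k\delta}$ admits a form compatible with $J$; the case $k=N$ is the Lemma. The base case $k=0$ is $[\w]$ itself. For the inductive step, start from a $J$-compatible form in class $e_{k\delta}$, whose areas on $C$ and $D$ are $\w(C)$ and $\w(D)-k\delta$. Inflating along $C$ (square $-1$) by the amount $\delta$ is permitted since $\delta < \w(C)$ for $N$ large, and by the inflation theorem of Section \ref{cone} it yields a form taming $J$ in class $e_{k\delta} + \delta\,\mathrm{PD}(C)$; because $C\cdot D = 1$, this raises the area of $D$ to $\w(D) - (k-1)\delta$. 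Next, inflating along $D$ (square $-2$) by $\delta$ is permitted precisely when $\delta < \tfrac12\bigl(\w(D) - (k-1)\delta\bigr)$, that is when $(k+1)\delta < \w(D)$, which holds since $(k+1)\delta \le N\delta = t_0 < \w(D)$. The outcome is a form taming $J$ in class $e_{k\delta} + \delta\,\mathrm{PD}(C) + \delta\,\mathrm{PD}(D) = e_{(k+1)\delta}$. After each inflation I would invoke the $b^+=1$ $J$-compatible inflation (via Li--Zhang \cite{LZ11}) to replace the taming form by a $J$-compatible one in the same cohomology class, so that the next inflation step applies; this closes the induction.

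The main obstacle is the bookkeeping that makes the alternation close up: one must verify that the only bottleneck is the $(-2)$-curve bound $\w(D)/2$, and that inflating along the exceptional curve $C$ replenishes $\w(D)$ by exactly $\delta$ (through $C\cdot D = 1$) so that the accumulated constraints collapse to the single inequality $t_0 < \w(D) = \w(E)-\w(C)$. A secondary technical point is maintaining a genuine \emph{compatible} pair between successive inflations, which is the reason for passing through the $b^+=1$ $J$-compatible inflation at every stage rather than relying on a single taming inflation. Finally, given $\eps > 0$, taking $t_0 = \w(D) - \eps$ produces a form $\w_\eps$ taming $J$ with $\w_\eps(E) = \w(C) + \eps$, which is the asserted $\eps$-closeness.
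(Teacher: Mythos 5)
Your proposal is correct and follows essentially the same strategy as the paper: alternate inflation along the two components $C$ and $D$, exploiting $C\cdot D=1$ to replenish the room for the $(-2)$-inflation, with the Li--Zhang tame/compatible cone comparison used to restore a compatible pair between successive steps. The only difference is scheduling --- you take $N$ equal small steps so that every intermediate class lies exactly on the ray $[\w]+t\,\mathrm{PD}(E)$, whereas the paper inflates along $D$ by nearly its maximal allowed amount each round and then corrects along $C$, halving the gap $\w(E)-\w(C)$ geometrically; both reach the same truncated ray.
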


\begin{proof}
Assume for a given $J$ compatible with $\w$, the exceptional class $E$ has a stable curve representative degenerating as in the homology type (2) in Lemma \ref{estable2};  i.e. the stable curve has two irreducible component classes $C$ and $D$. They each have an embedded representative and intersect each other transversely $C \cdot D=1$.

We will perform alternate inflation on the curves $D, C$; every step starts with a compatible form and ends with a tamed form, however, in between two steps we use cone Theorem of  \cite{LZ09} introduced in section \ref{tcinf}   to reposition on a compatible form in the same cohomology classes.

	Notice that each inflation step will affect all areas of  $ C, D, E$.

	\begin{itemize}
		\item Step 1: inflate along $D = (E-C)$, on an arbitrarily large  interval inside $0 \leq t <  \frac{\w(E)- \w(C)}{2} $ \footnote{ A convenient way of avoiding the $epsilon$ in each step of the J-tame inflation process is to use the formal inflation as defined in \cite{Zha17}. }. We obtain an   $\w_1$ (compatible with $J$ due to the above discussion) s.t. areas $  (  \w_1(E), \w_1(C) )$  is  arbitrarily close to  $( \frac{\w(E)+ \w(C)}{2}, \frac{\w(E)+ \w(C)}{2} ).$ 
		\item Step 1': inflate along $C$, on an arbitrarily large  interval inside $0 \leq t < \omega_1(C)= \frac{\w(E)- \w(C)}{2} $. As above, we pick  $\w^{'}_{1}$ compatible with $J$  s.t. areas $  (  \w^{'}_{1}(E), \w^{'}_{1}(C) )
		$  are  arbitrarily close to  $( \frac{\w(E)+ \w(C)}{2} =\w(C) +\frac12 (\w(E)-\w(C)), \w(C) ).$

	\item  Repeat the above two steps, one gets a symplectic form compatible with $J$  with area of the curves $E, C$ arbitrarily close to $( \frac{1}{4}( {\w(E)- \w(C)})+\w(C), \w(C) ).$

			\item   $\cdots$

			\item  Get a symplectic form $\omega^{'}_{N+1}$ compatible with $J$  with area of the curves $E, C$ arbitrarily close to $((\frac{1}{2})^N( {\w(E)- \w(C)})+\w(C), \w(C) ).$

	\item   $\cdots$
	\end{itemize}

This finishes the proof.
\end{proof}

\begin{rmk}
Let's use the following example to make the above process more explicit:  $E=E_1, C=E_2,$ and $D=E_1-E_2.$

	\begin{itemize}
		\item Step 1: inflate along $D = (E-C)$, one have $\w_1 \sim ( \mu, 1, (c_1+c_2)/2, (c_1+c_2)/2, \cdots ).$ 
		\item Step 1': inflate along $C$, one have $\w^{'}_{1}\sim( \mu, 1, (c_1+c_2)/2, c_2, \cdots ).$ 

	\item  Repeat the above two steps, one has $\w^{'}_{2}\sim( \mu, 1, c_1/4+3c_2/4, c_2, \cdots ).$ 

			\item   $\cdots$

			\item $\w^{'}_{K}\sim ( \mu, 1, c_1/(2^K)+c_2 (2^K-1)/(2^K),  c_2, \cdots ).$ 

	\item   $\cdots$
	\end{itemize}
\end{rmk}

Up to now, we have not used the condition $\mu >n$. However, in the event that there is some embedded section type embedded curve of index $-2$ the following Lemma guarantees optimal bounds for the corresponding negative inflation along this curve. Essentially this says that any nonempty  stratum of $\mA^{2}_{u, B}$ stays stable under inflation until reaching the optimal chamber.

\begin{lma} \label{type2}
Any curve in class $B+kF-\sum E_i$ with index $-2$ has a positive area when $\mu >n$.
\end{lma}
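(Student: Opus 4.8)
The plan is to prove the lemma by a direct cohomological area computation: I would turn the index $-2$ constraint into a numerical relation among $k$, $g$ and the number of exceptional summands, and then bound the area from below using the reduced normalization $0<c_i<1$ together with the hypothesis $\mu>n$.

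First I would fix notation for the class. Write $A = B + kF - \sum_{i\in S} E_i$ with $S\subseteq\{1,\dots,n\}$ and set $l:=|S|$, so that $l\le n$. The exceptional coefficients may be taken to lie in $\{0,1\}$: such a class is a genus-$g$ section as in Proposition \ref{smoothsection}, and the adjunction formula \eqref{AF}, $2g(A)-2 = A\cdot A + K\cdot A$, pins each coefficient to be $0$ or $1$. Using $u=(\mu,1,c_1,\dots,c_n)$ together with $B\cdot F=1$ and $E_i\cdot E_j=-\delta_{ij}$, the $\omega$-area of $A$ is
\[
\omega(A) = \mu + k - \sum_{i\in S} c_i .
\]

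Second I would translate the index hypothesis. As recorded in Definition \ref{subsetA}, the section class $B+kF-\sum_{i\in S}E_i$ has index $-4k+2g-2+2l$; setting this equal to $-2$ gives $2k=g+l$, so $k=(g+l)/2\ge 0$. This is the key step, since it shows that the only classes for which positivity of the area is genuinely in doubt are the \emph{long} sections with $l>g$ (equivalently $k>g$), where $\sum_{i\in S}c_i$ can be comparable to $\mu$; for $l\le g$ positivity is immediate.

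Finally I would combine the two. Since $0<c_i<1$ for each $i$ by the reduced normalization, we have $\sum_{i\in S}c_i < l$, and therefore
\[
\omega(A) > \mu + k - l = \mu + \tfrac{g-l}{2} \ge \mu - \tfrac{l}{2} \ge \mu - \tfrac{n}{2} > n - \tfrac{n}{2} = \tfrac{n}{2} \ge 0,
\]
using $g\ge 0$, then $l\le n$, then $\mu>n$ in turn (the case $l=0$ being trivial, as $\omega(A)=\mu+k>0$). This yields $\omega(A)>0$, as claimed. There is no real analytic obstacle here; the lemma is a bookkeeping statement, and the only things to get right are the index-to-$(k,l,g)$ relation $2k=g+l$ and the observation that the extremal case is $l=n$ with every $c_i$ close to $1$, where $\sum_{i\in S} c_i\to n$ and the area approaches $\mu+(g-n)/2$. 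The hypothesis $\mu>n$ comfortably dominates $(n-g)/2\le n/2$, which is exactly why the stated bound on $\mu$ suffices.
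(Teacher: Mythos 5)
Your proof is correct and follows essentially the same route as the paper's: translate the index $=-2$ condition into the constraint $k\ge 0$, then bound the area $\mu+k-\sum_{i\in S}c_i$ from below using $0<c_i<1$, $l\le n$, and $\mu>n$. One small caveat: the quantity $-4k+2g-2+2l$ appearing in Definition \ref{subsetA} is really the codimension (the index, per equation \eqref{indA}, is $4k+2-2g-2l$, as used in the paper's own proof), so the correct relation is $2k=g+l-2$ rather than $2k=g+l$; this does not affect your argument since the only fact needed is $k\ge 0$, which holds either way.
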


\begin{proof}
$B+kF-\sum E_i$ has codimension    $-4k +2g-2 +2l =2$  which means  index   $4k +2 -(2g +2l) =-2$ where $l$ is the number of $-E_i's$ in  $B+kF-\sum E_i$.     Since $l \ge 0, g \ge 1$ which means $4k \ge -2$,     and now we have $k\ge 0$ because $k$ is an integer.     The curve class $B+kF-\sum E_i$ has symplectic area  $\mu+k-\sum c_i>0$  if $\mu>n.$
\end{proof}

\begin{cor}\label{strconst}
For any  $ u $ with $\mu >max\{g, n\}$,

\begin{enumerate}
       \item  $\mA_{u}^{top}$ is constant for any such  $u$ within and outside any chambers.  \item  $\mA_{u}^2 $ is constant for any such $u$ within and outside any chambers.  \item $\mA_{u}^{high}\subset \mA_{u'}^{high}$, $u=(\mu,1,c), u'=(\mu+\eps,1,c)$,  and for all $\mu >1, \eps > 0$

\end{enumerate}

\end{cor}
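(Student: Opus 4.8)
The plan is to upgrade the one-sided inflation inclusions of Proposition \ref{inflation} into genuine \emph{equalities} of subsets of the ambient space of almost complex structures, by pairing them with the opposite inclusions supplied by Lemma \ref{incup}, and then to read off part (3) formally. As a preliminary observation I would record that $\mA_u \subset \mA_{u'}$ whenever $u' = (\mu+\eps,1,c)$: every $J \in \mA_u$ tames a form in class $u$, and by the stable fiber existence used in the proof of Lemma \ref{incup} one inflates along $F$ to obtain a taming form in class $u'$, so $J \in \mA_{u'}$. Equivalently $\mA_u = \bigcup_{\mC}\mA_{u,\mC} \subset \bigcup_{\mC}\mA_{u',\mC} = \mA_{u'}$.

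For part (1) I have $\mA_u^{top} \supset \mA_{u'}^{top}$ directly from Proposition \ref{inflation}(1), valid for $\mu > \max\{n,g\}$. For the reverse inclusion I take $J \in \mA_u^{top}$. Applying Lemma \ref{incup} to each open driven set $\mA_{E'}$, the embedded exceptional representatives of $J$ in all classes of type $E_i$ or $F-E_i$ survive, so $J \in \bigcap_{E'}\mA_{u',E'}$. The only remaining point is the negative defining condition, that $J$ carries no embedded section class $B' = B+kF-\sum E_i$ of index $\le -2$ at $u'$; here I use area positivity: if $J$ had such a curve $C$ then every taming form is positive on $[C]$, so $\omega_u(B') > 0$ and $B'$ would already be one of the classes excluded by $J \in \mA_u^{top}$. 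Thus no new negative section curve appears, $J \in \mA_{u'}^{top}$, and chaining over $\eps$ gives constancy on all of $\mu > \max\{g,n\}$. In this range Lemma \ref{type2} together with the estimate $\mu + k - \sum c_i > 0$ for the higher-index classes shows that every relevant negative section class keeps positive area, so no interior wall is ever crossed; this is exactly what makes the conclusion valid \emph{within and outside any chambers}.

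For part (2) I would run the identical two-sided argument on each codimension-$2$ piece. On the degeneration strata, Proposition \ref{inflation}(2) gives $\mA^2_{u,C,D} \supset \mA^2_{u',C,D}$, and the reverse inclusion follows as above: the $\{C,D\}$-degeneration of the single exceptional class is a property of the fixed $J$, its two embedded components persist under $F$-inflation by Lemma \ref{incup} (and may be repositioned by Lemma \ref{cod2inf}), the embeddedness of all other exceptional classes is unchanged, and the complementary section condition is again controlled by area positivity. On the section strata, Proposition \ref{inflation}(3) gives $\supset$ while Lemma \ref{type2} guarantees that an index $-2$ class $B'$ retains positive area for $\mu > n$, so the stratum neither becomes empty nor collapses onto a wall; the increasing inclusion then proceeds as in part (1). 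Taking the disjoint union over all $\{C,D\}$ and all index $-2$ classes $B'$ yields $\mA^2_u = \mA^2_{u'}$.

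Part (3) is then formal. By (1) and (2) the subsets $\mA^{top}$ and $\mA^2$ are literally the same for $u$ and $u'$, while $\mA_u \subset \mA_{u'}$ by the preliminary step, so
\[
\mA_u^{high} = \mA_u \setminus (\mA_u^{top} \cup \mA_u^2) = \mA_u \setminus (\mA_{u'}^{top} \cup \mA_{u'}^2) \subset \mA_{u'} \setminus (\mA_{u'}^{top} \cup \mA_{u'}^2) = \mA_{u'}^{high}.
\]
I expect the genuine obstacle to lie in the increasing inclusions of (1) and (2): one must verify that the \emph{complementary} defining conditions of each stratum transfer from $u$ to $u'$, i.e. that no previously absent negative-index section curve and no extra exceptional degeneration materializes as $\mu$ grows. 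This is precisely where $\mu > \max\{g,n\}$ is used, via Lemma \ref{type2} and the positivity estimate for the higher-codimension section classes; outside this range those areas can cross zero and the matching of complements fails.
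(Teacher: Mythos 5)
Your proof is correct and follows essentially the same route as the paper's (one-line) proof: you pair the downward inclusions of Proposition \ref{inflation} with the upward inclusions of Lemma \ref{incup}, control the complementary defining conditions (no new negative-index section curves or degenerations appearing) by taming-positivity together with Lemma \ref{type2}, and deduce part (3) formally from the equalities in (1)--(2) plus $\mA_u\subset\mA_{u'}$. The only point you leave aside is variation of the blow-up sizes $c_i$ within a chamber --- your argument is confined to horizontal lines, whereas the paper's use of this corollary in Proposition \ref{stab01} for $c_i\neq c_i'$ relies on the additional simultaneous inflation along the exceptional classes described in the proof of Proposition \ref{inflation} --- but for the horizontal-line statement that drives the colimit and the stability results your argument is complete.
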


\begin{proof}

This is a straightforward combination of Lemma \ref{inflation}, Lemma \ref{cod2inf} and Lemma \ref{type2}.  Note that the condition on $\mu$ in the current statement comes from $\mu>g$  of Lemma \ref{inflation} and $\mu >n$ in Lemma \ref{cod2inf}.

\end{proof}

\begin{prp} [=Theorem \ref{stab01intro} ]\label{stab01}
Suppose $\mu, \mu'>max\{g, n\}$ for $u= (\mu, 1, c_1,\cdots, c_n)$, $u'= (\mu', 1, c'_1,\cdots, c'_n)$,  then the groups  $\pi_0$ and $\pi_1$ of $G^g_{u,n}   $ and $   G^g_{u',n}  $ are  the same.

\end{prp}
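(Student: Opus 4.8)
The plan is to feed the stratification of $\mA_u$ from Lemma \ref{strsep} into the homotopy fibration \eqref{fibacs}, namely $G^g_{u,n}\to \Diff_0(M)\to \mA_u$ (writing $M=M_g\# \overline{n \CC P^2}$), and to exploit that $\Diff_0(M)$ carries no symplectic data, so its homotopy groups are fixed once and for all. From the long exact sequence of \eqref{fibacs}, the group $\pi_0(G^g_{u,n})$ is the cokernel $\coker\big(\pi_1(\Diff_0(M))\to\pi_1(\mA_u)\big)$, while $\pi_1(G^g_{u,n})$ sits in the five-term exact segment governed by $\pi_1,\pi_2(\Diff_0(M))$ and $\pi_1,\pi_2(\mA_u)$. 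Since the $\Diff_0(M)$ terms are the same for $u$ and $u'$, the entire statement reduces to producing isomorphisms $\pi_i(\mA_u)\xrightarrow{\ \cong\ }\pi_i(\mA_{u'})$ for $i=0,1,2$ that are compatible with these exact sequences. Here $\mA_u$ always denotes the canonical connected component fixed earlier; since inflation does not change connected components, the comparison is between corresponding components.

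The first move is to discard the high-codimension part. By Lemma \ref{strsep} and the subsequent remark, $\mA^{high}_u$ is a union of Fr\'echet suborbifolds of codimension $\ge 4$, and $\mA_u\setminus \mA^{high}_u=\mA^{top}_u\cup\mA^2_u$. A standard general-position argument shows that the inclusion $\mA^{top}_u\cup\mA^2_u\hookrightarrow\mA_u$ is an isomorphism on $\pi_i$ for $i\le 2$: a smooth map of $S^i$ can be perturbed off a stratified set of codimension $\ge 4$ once $i\le 3$ (giving surjectivity), and a homotopy $S^i\times[0,1]$ once $i+1\le 3$ (giving injectivity). Hence it suffices to control the low homotopy groups of $\mA^{top}_u\cup\mA^2_u$.

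Next I would invoke the constancy of the surviving strata. By Corollary \ref{strconst}(1)--(2), for every $u$ with $\mu>\max\{g,n\}$ the subspaces $\mA^{top}_u$ and $\mA^2_u$ have $u$-independent homeomorphism type, the identifications being realized by the inflation inclusions of Proposition \ref{inflation}: inflating up along $F$ as in Lemma \ref{incup}, and repositioning down along $B+xF-\sum E_i$ together with the exceptional classes $E_i$ and $F-E_i$, using Lemma \ref{cod2inf} on the $\{C,D\}$-degenerate locus and Lemma \ref{type2} to keep the index $-2$ section curves of positive area. Granting that these inclusions are compatible with the way $\mA^2_u$ attaches to the frontier of $\mA^{top}_u$, the union $\mA_u\setminus \mA^{high}_u$ has $u$-independent homotopy type throughout $\mu>\max\{g,n\}$ (and, through the full force of Corollary \ref{strconst}, also as the $c_i$ vary within that region). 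Combined with the previous paragraph, this yields isomorphisms $\pi_i(\mA_u)\cong\pi_i(\mA_{u'})$ for $i\le 2$, induced by the inflation maps, hence fitting into the commutative ladder of fibrations determined by the inflation inclusion between $\mA_u$ and $\mA_{u'}$.

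Finally I would assemble the two sides: applying the five lemma to the ladder of long exact sequences, with the vertical maps on $\pi_1,\pi_2(\Diff_0(M))$ the identity and those on $\pi_1,\pi_2(\mA_u)$ the isomorphisms just produced, gives $\pi_1(G^g_{u,n})\cong\pi_1(G^g_{u',n})$, and the matching comparison of the cokernels gives $\pi_0(G^g_{u,n})\cong\pi_0(G^g_{u',n})$. The hard part will be the constancy step: showing that the attaching of the codimension-$2$ stratum $\mA^2_u$ to the open stratum $\mA^{top}_u$ is genuinely $u$-independent, equivalently that the one-sided inflation inclusions of Proposition \ref{inflation} are homotopy equivalences and not merely inclusions. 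This is exactly where the hypothesis $\mu>\max\{g,n\}$ is essential, via Lemma \ref{type2} (so that no index $-2$ section curve acquires negative area and collapses a stratum) and via the requirement that the upward inflation along $F$ and the downward inflation along the section and exceptional classes be mutually inverse up to homotopy.
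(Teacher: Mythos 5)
Your proposal is correct and follows essentially the same route as the paper: inflation inclusions (Proposition \ref{inflation}, Lemmas \ref{incup}, \ref{cod2inf}, \ref{type2}) feed into the ladder of fibrations \eqref{hcomm}, and Corollary \ref{strconst} plus the codimension $\ge 4$ bound on $\mA^{high}_u$ from Lemma \ref{strsep} give the isomorphisms $\pi_i(\mA_u)\cong\pi_i(\mA_{u'})$ for $i\le 2$, whence the five lemma. You usefully spell out the general-position step and the cokernel description of $\pi_0$ that the paper's proof leaves implicit, but the argument is the same.
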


\begin{proof}
First let us assume that  $u$ and $u'$ are on a horizontal line $u=(\mu,1,c), u'=(\mu+\eps,1,c)$,  and for all $\mu >1, \eps > 0$
Proposition \ref{inflation} allows us to write the right-hand side inclusions in the following diagrams
 
\begin{equation}\label{hcomm}
\begin{array}{ccccccc}
{(a)} & & G^g_{u,n} &\to & \Diff_0(M_g \# n \overline{ \CC P^2}) & \to & \mA_{u}\\
& & \downarrow & & \;\downarrow = & & \downarrow\\
& & G^g_{u',n} &\to & \Diff_0(M_g \# n\overline{ \CC P^2}) & \to & \mA_{u'},\\
& & & & & & \\
{ (b)} & & & G^g_{u,n}  \rightarrow G^g_{u',n} & & \\
& & &  \searrow \downarrow & & \\
& & &  G_{u''}& &
\end{array}
\end{equation}
 
 Along with Corollary \ref{strconst} this is sufficient to get the results along horizontal lines. Incidentally, these diagrams also allow us to state that the homotopy colimit exists on horizontal lines as $\epsilon  \rightarrow  \infty$.
 
 The same corollary implies that the open strata and the codimension $2 $ strata are constant {\it within the chambers}, thus   the result can be established in full generality for all $u$ within the conditions of the theorem.

\end{proof}

Now let's recall the following result from \cite[Proposition 5.1]{McD08}:

 \begin{thm}\label{csympk}
  Let $(M,\w)$ be a  symplectic manifold  and denote by $M_n$ its $n$-fold blow  up.  Then:\smallskip
  
 {\rm  (i)}  There is a homomorphism $f_*^n: 
  \bigl(\ZZ \oplus \pi_2M\bigr)^n \to \pi_2\bigl(B\Diff(M_n)\bigr)$ whose kernel is 
contained in the torsion subgroup of $\bigl(\pi_2M\bigr)^n$.\smallskip

{\rm (ii)}    there is $\eps_0>0$ such that
the elements in $f_*^n\bigl((\pi_2M)^n\bigr)$ can all be realised in
$BSymp(M_n, \om_\eps)$ whenever the blow up parameter $\eps = (\eps_1,\dots,\eps_n)$ satisfies  $\eps_i\le \eps_0$ for all $i$.
\end{thm}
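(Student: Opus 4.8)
The plan is to realize $f_*^n$ by an explicit family of $n$-point blow-ups and then to detect its image through characteristic numbers of the total spaces. Since a class in $\pi_2\bigl(B\Diff(M_n)\bigr)=\pi_1\bigl(\Diff(M_n)\bigr)$ is represented by a smooth $M_n$-bundle over $S^2$ (clutching), I would build such bundles by fiberwise blow-up: start from the trivial bundle $M\times S^2\to S^2$, choose $n$ disjoint sections $s_i\colon S^2\to M\times S^2$, and blow up along all of them fiberwise. Given $\bigl((k_i,a_i)\bigr)_{i=1}^n\in(\ZZ\oplus\pi_2 M)^n$, I would take $s_i(z)=(\alpha_i(z),z)$ with $\alpha_i\colon S^2\to M$ representing $a_i$, and twist the framing of the $i$-th normal bundle by $k_i\in\ZZ=\pi_1(U(m))$, where $m=\dim_{\CC}M$; equivalently, I insert $k_i$ copies of the circle loop rotating a neighborhood of the $i$-th exceptional divisor. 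Because the sections have disjoint images these constructions do not interfere, so $f_*^n$ is additive in each factor and is a homomorphism.

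The technical heart, and the step I expect to be the \emph{main obstacle}, is bounding the kernel. The total space $X$ of each $M_n$-bundle over $S^2$ produced above carries, for each $i$, a fiberwise exceptional class that I would check extends to an integral class $\mathcal{E}_i\in H^2(X;\ZZ)$; pairing polynomials in the $\mathcal{E}_i$ and $c_1(TX)$ against $[X]$ produces characteristic numbers invariant under homotopy of the family. On the $i$-th $\ZZ$-factor the rotation number couples to the normal data of the exceptional divisor (the bundle $\mathcal{O}(-1)$ over $\CP{m-1}$) to give a nonzero integer, so each $\ZZ$ injects and the $\ZZ^n$-summand is detected independently of the rest. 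On the $(\pi_2 M)^n$-summand the construction depends on $a_i$ only through the homology class of the blown-up section, and I would extract from the family the Hurewicz image of $a_i$ in $H_2(M;\ZZ)$ together with its intersection and $c_1(TM)$ pairings; since these are group homomorphisms out of $(\pi_2 M)^n$, they can vanish only on torsion. The delicate point is to produce enough such pairings to detect the entire free part of $(\pi_2 M)^n$, so that $\ker f_*^n$ is forced into its torsion subgroup, which is exactly the assertion in (i).

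For (ii) I would re-run the $\pi_2 M$-construction symplectically. Equip $M\times S^2$ with a split form $\om\oplus\sigma$ and arrange each $\alpha_i$ so that its graph is a symplectic section; a fiberwise symplectic blow-up of size $\eps_i$ along these disjoint sections yields a symplectic $M_n$-bundle whose fiber is $(M_n,\om_\eps)$, and hence lifts the corresponding class from $B\Diff(M_n)$ to $BSymp(M_n,\om_\eps)$. The only constraint is that a symplectic ball of capacity $\eps_i$ must embed into every fiber at once; since the base $S^2$ and the sections are compact, McDuff--Gromov ball-embedding theory supplies a single threshold $\eps_0>0$ below which all these embeddings exist uniformly over the base, and the key point to verify is precisely this uniformity over the whole $2$-sphere family rather than fiber by fiber. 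Taking $\eps_i\le\eps_0$ for all $i$ then realizes every element of $f_*^n\bigl((\pi_2 M)^n\bigr)$ in $BSymp(M_n,\om_\eps)$; the rotation classes, being generated by genuine Hamiltonian circle actions on the small blow-up, require no smallness hypothesis, which is why (ii) is phrased for the $(\pi_2 M)^n$ part.
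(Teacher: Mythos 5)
You should first be aware that the paper contains no proof of this statement at all: it is imported verbatim as Proposition~5.1 of \cite{McD08} (the sentence preceding it reads ``let's recall the following result from [McD08]''), so there is no in-paper argument to compare yours against, and the authors are not responsible for supplying one. Judged on its own terms, your construction is the standard (and, as far as the construction goes, the correct) one: build $M_n$-bundles over $S^2$ by fiberwise blow-up of $M\times S^2$ along $n$ disjoint graph sections $s_i(z)=(\alpha_i(z),z)$ with twisted normal framings, and for (ii) replace this by a parametrized symplectic blow-up along a family of embedded balls of capacity $\eps_i$, with the threshold $\eps_0$ supplied uniformly over the compact base. That is essentially how the classes are produced in \cite{McD08}, and your part (ii) is in reasonable shape modulo the routine ``Darboux with parameters'' packaging.

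The genuine gap is in (i), and you have flagged it yourself: the entire content of the kernel statement is the \emph{detection} of the image, and your proposal for it is a declared intention (``produce enough such pairings to detect the entire free part of $(\pi_2M)^n$'') rather than an argument. Nothing in your text identifies which characteristic numbers of the $(\dim M+2)$-dimensional total space recover the Hurewicz image of $\alpha_i$, nor why the classes $\mathcal{E}_i$ extend integrally to the total space of a merely smooth family, nor why the resulting homomorphisms out of $(\pi_2M)^n$ jointly have kernel contained in the torsion. Two further points need care even before detection. First, in the smooth category the framing twist lives a priori in $\pi_1(SO(2m))=\ZZ/2$, not $\ZZ$; to get a $\ZZ$'s worth of twisting you must fix complex (or $U(m)$-) framings of the normal data, so the source of $f_*^n$ is really built from the unitary frame bundle, and you should say so. Second, once you do that, the fibration $U(m)\to Fr^{\CC}(M)\to M$ shows that lifting $\alpha_i$ to a framed family is obstructed by $c_1(TM)(\alpha_i)\in\pi_1(U(m))$, so it is not automatic that the map is defined on the \emph{direct sum} $\ZZ\oplus\pi_2M$ rather than on an extension; this must be resolved by a choice of based lifts and checked to be independent of that choice. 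None of these are fatal --- they are all handled in \cite{McD08} --- but as written your proposal establishes only the existence of the families, not the injectivity statement that the theorem is actually used for.
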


Applying Theorem \ref{csympk}  to our case where $M_g \# n\overline{ \CC P^2}$  is the blow up ruled surface $M_g$:

The torsion free part of $\pi_2(M_g \# n\overline{ \CC P^2})$ is  $\ZZ$, and hence by (i), $f_*^n\bigl((\pi_2(M_g \# n\overline{ \CC P^2})^n\bigr)$ has torsion-free part at least $\ZZ^n$. By (ii) those can all be realized in $\pi_1G^g_{u,n}$ whenever the vector $  u  $  comes from small enough blow up sizes $c_i$.

\subsection{Stability of equal size 1/2}\label{1/2}

We can prove the stronger version of the stability result for equal size 1/2.  Here we first describe the chamber structure and then provide a proof.

Notice that the space of such forms in class $(\mu, 1, \frac12,\cdots, \frac12), \mu>g$ is a line.  The positive codimension strata are  given exclusively by embedded curves in classes of they type $B-kF$  or $B-kF-\sum E_i.$  This is because since each $E_i$ and $F-E_j$ has area $\frac12,$ there is no degeneration of exceptional curves.  This means that the entries  $\mA^2_{u, C, D}$ or $\mA_u^{high}$ in the first row of table \ref{partable} are actually empty.  Hence on the line  $(\mu, 1, \frac12,\cdots, \frac12)$,  the chambers are guided by those values of $\mu$ that are either integer points or half integer points where curves $B-kF$  or $B-kF-\sum E_i$ changes.

Hence the precise statement is the following:
\begin{thm}[=Theorem \ref{stabintro}]

The homotopy type of $G^g_{\mu,n}$ is constant for $\frac{k}{2} < \mu \le \frac{k+1}{2},$ for
any integer $k \ge max\{2g, n\}$. Moreover as $\mu$ passes the half integer $\frac{k+1}{2}$, all the groups
$\pi_i, i = 0,\cdots,2k + 2g - 1$ do not change.

\end{thm}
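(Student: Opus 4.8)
The plan is to combine the stratification of $\mA_\mu$ from Definition \ref{subsetA} with the inflation machinery of Proposition \ref{inflation} and Corollary \ref{strconst}, exploiting the special feature of the line $(\mu,1,\tfrac12,\dots,\tfrac12)$: since every class $E_i$ and $F-E_j$ has area exactly $\tfrac12$, an exceptional class can never split off a curve of strictly smaller area, so no $\{C,D\}$ degeneration ever occurs. Consequently $\mA_\mu^{mild}=\varnothing$ and the exceptional-degeneration part of $\mA^{high}_\mu$ is empty; the only positive-codimension strata of $\mA_\mu$ are those cut out by section classes $A=B-aF-\sum_{i\in S}E_i$ (written $B+kF-\sum E_i$ in the earlier sign convention). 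The chamber walls are exactly the values of $\mu$ at which such an $A$ changes the sign of its area $\mu-a-\tfrac{|S|}2$, i.e. the integer and half-integer points.

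First I would treat the interior of a chamber. Fix $\tfrac k2<\mu\le\mu'\le\tfrac{k+1}2$. The collection of section classes of strictly positive area, namely those with $a+\tfrac{|S|}2<\mu$, is the same for every $\mu$ in this interval, so the labelled strata $\mA^{top}_\mu$, $\mA^2_{\mu,B}$ and the higher strata are indexed by the same homology data throughout the chamber. Corollary \ref{strconst}, built from Lemma \ref{incup}, Proposition \ref{inflation}, Lemma \ref{cod2inf} and Lemma \ref{type2}, then furnishes mutually inverse inflation inclusions of each stratum as $\mu$ moves inside the chamber, so the stratified inclusion $\mA_\mu\hookrightarrow\mA_{\mu'}$ is a weak homotopy equivalence. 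Feeding this into the fibration $G^g_{\mu,n}\to\Diff_0\to\mA_\mu$ of \eqref{fibacs}, whose total space does not depend on $\mu$, yields $G^g_{\mu,n}\simeq G^g_{\mu',n}$, the first assertion. Here the hypothesis $k\ge\max\{2g,n\}$ is used to guarantee, via Lemma \ref{type2} and the range conditions $\mu>g$, $\mu>n$ of the inflation lemmas, that the repositioning inflations stay inside the chamber and that every section curve involved keeps positive area.

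Next I would cross the wall at $\mu=\tfrac{k+1}2$. The section classes that acquire positive area exactly as $\mu$ passes $\tfrac{k+1}2$ are those $A=B-aF-\sum_{i\in S}E_i$ with $2a+|S|=k+1$; since $A\cdot F=1$ each is a section of arithmetic genus $g$, and a direct computation gives $A\cdot A=-(2a+|S|)=-(k+1)$. By Proposition \ref{stratum} the stratum $\mA_{u,A}$ on which $A$ is realised is therefore a co-oriented Fr\'echet suborbifold of real codimension $2(-A\cdot A-1+g)=2(k+1)-2+2g=2k+2g$. These are the only new strata created at the wall, so the difference $\mA_{\mu'}\setminus\mA_\mu$ is controlled by suborbifolds of codimension $2k+2g$, whence the inflation inclusion $\mA_\mu\hookrightarrow\mA_{\mu'}$ of Lemma \ref{incup} is $(2k+2g-1)$-connected. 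Transporting this connectivity through the two fibrations $G^g\to\Diff_0\to\mA$ with common fixed total space $\Diff_0$ shows that $\pi_i(G^g_{\mu,n})\to\pi_i(G^g_{\mu',n})$ is an isomorphism for $i\le 2k+2g-1$, the second assertion.

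The main obstacle is precisely the codimension count of the previous paragraph: one must verify that, on the $\tfrac12$-line, no other stratum — in particular no reducible section configuration, no multiply-covered component, and no exceptional degeneration — is created as $\mu$ crosses $\tfrac{k+1}2$, since a single stratum of codimension $<2k+2g$ would lower the stability range. This is exactly where the equal-size $\tfrac12$ hypothesis is essential: it forces every exceptional class to stay embedded, and, together with the Ivashkovich--Shevchishin index formula \eqref{indA} and the transversality of the universal moduli spaces underlying Proposition \ref{stratum}, it pins every newly appearing stratum to the section classes with $2a+|S|=k+1$ and to the single codimension value $2k+2g$. Establishing this exhaustiveness, and checking that the inflations of Corollary \ref{strconst} genuinely realise the two-sided inclusions within each chamber, is the technical heart of the argument.
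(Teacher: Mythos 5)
Your proposal follows the same route as the paper's proof: the observation that on the line $(\mu,1,\tfrac12,\dots,\tfrac12)$ no exceptional class can degenerate (so $\mA^{mild}_u$ and the degeneration part of $\mA^{high}_u$ are empty and the only positive-codimension strata come from section classes $B+kF-\sum E_i$), two-sided inflation inclusions of each stratum within a chamber fed into the fibration over $\mA_u$ with fixed total space $\Diff_0$, and a codimension count for the strata created at a half-integer wall. Your computation that the classes appearing at $\mu=\tfrac{k+1}{2}$ have $A\cdot A=-(k+1)$ and hence cut out strata of codimension $2(-A\cdot A-1+g)=2k+2g$ is correct and is in fact more explicit than the paper's own two-sentence proof, which simply defers to Proposition \ref{stab01} and Corollary \ref{strconst}.

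Two points need adjustment. First, you attribute the ``mutually inverse'' inclusions of all strata inside a chamber to Corollary \ref{strconst}, but part (3) of that corollary only gives the one-sided inclusion $\mA^{high}_u\subset\mA^{high}_{u'}$. The single new ingredient of the paper's proof is precisely the missing reverse inclusion $\mA_{u',B+kF-\sum E_i}\subset\mA_{u,B+kF-\sum E_i}$, obtained by inflating along the embedded negative-square section curves themselves; because these have $A\cdot A<0$ the inflation parameter is bounded by $\w(A)/(-A\cdot A)$, which is exactly what confines the argument to one chamber and produces the half-integer walls. You correctly flag this as the technical heart, but it should be stated as an additional inflation, not as a consequence of Corollary \ref{strconst}. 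Second, your transport step at the wall --- from a $(2k+2g-1)$-connected map $\mA_\mu\hookrightarrow\mA_{\mu'}$ to isomorphisms on $\pi_i(G^g_{\mu,n})$ for $i\le 2k+2g-1$ --- loses degrees in the standard bookkeeping: a codimension-$c$ complement inclusion induces isomorphisms on $\pi_i$ of the base only for $i\le c-2$, and the five lemma then yields isomorphisms on $\pi_i$ of the fibers only for $i\le c-3$, with a surjection one degree higher. The paper asserts the same range without justification, so this is inherited rather than introduced by you, but as literally written your transport step does not establish the stated range.
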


\begin{proof}

The proof is basically the same as that in Proposition \ref{inflation}.   The only thing added here is the inflation along  the embedded curves in classes $B-kF-\sum E_i.$ This provide inclusions of  the higher codimensional strata    $\mA_{u', B-kF-\sum E_i} \subset \mA_{u, B-kF-\sum E_i} $ for $u=(\mu,1,c), u'=(\mu+\eps,1,c)$. Along with Corollary \ref{strconst} which provides the opposite inclusions, this provides all conditions to implement   Proposition \ref{stab01} yielding the result.
\end{proof}

\begin{cor}
 There are new elements in $\pi_1(G^g_{u,n})$ for a symplectic form without any Hamiltonian circle actions.
\end{cor}

\begin{proof}
Notice that by Theorem \ref{csympk}, on this line of equal size $1/2$, the rank of $\pi_1(Symp(M,\w))$ is positive, indeed at least $k$ for k-fold blowups.   On the other hand, by \cite[Corollary 3.6]{HK15}, if the number of blowup is large ($n \ge \mu/2$), there are points on this line that do admit Hamiltonian circle actions.
\end{proof}

\section{Singular foliations and topological colimit for equal size blowups}\label{s:out}

As mentioned in the proof of the  stability Theorem \ref{stab01} the homotopy colimit 
exists along each horizontal line fixing the blowup size.

 In what follows we are going to investigate that homotopy colimit in the special case when the horizontal line is along equal size blow-ups of increasing base area.

\subsection{The equal size $\frac12$ blowup}\label{ss:bi}

In this special case, we are going to use the relationship between the space of singular foliations and the space of almost complex structures to establish a smooth diffeomorphism model for this homotopy colimit denoted in this case by $G^g_{\infty,n}$. We will show that this smooth diffeomorphism model is disconnected and hence conclude that  $G^g_{\infty,n}$   is disconnected.

 Proposition \ref{incup}  and the  homotopy commutative diagrams \eqref{hcomm} shows that the homotopy colimit exists. Additionally, from  Theorem \ref{stabintro},
we have that as $\mu\to \infty$,  $\pi_iG^g_{\mu,n}=   \pi_iG^g_{\infty,n}$ for $i \leq 2k + 2g - 1.$    We also introduce the space $\mA_{\infty}$ of almost complex structures, as a colimit of $\mA_{u}$'s as $\mu \to \infty,$  namely, $\mA_{\infty} := \underset{\mu>max\{g,n\}}{\cup}{\mA_u}.$

In what follows, we will introduce a space of foliations that can be used to correctly identify a smooth model for our colimit. 
We are going to use the following singular foliation in the smooth (topological) category:

\begin{dfn}\label{singfol}
A {\bf n-singular foliation} by $S^2$ of $M_g \# \overline{ \CC P^2}$ is defined as a foliation with smooth embedded spherical leaves in the $F=[pt \times S^2]$ class
and n nodal leaves with two embedded spherical components, each in the class
$E_i, 1\le i \le n$ and $F-E_i$ where $E_i$'s are the exceptional classes. Also, we require that the complement of the singular leaf is a smooth foliation over an n-punctured genus g surface $Y$.   We denote ${  Fol}$  by the space of such n-singular foliations.
\end{dfn}

As argued in Section \ref{1/2}, any exceptional curve has the minimal area in the equal size $\frac12$ blowup case and they can never degenerate.   For each  $J$ taming a symplectic form in a class $u=(\mu, 1, \frac 12, \cdots \frac 12)$ there exists a singular foliation as in Definition \ref{singfol} where the leaves are actually $J-$ holomorphic.

  Let $\Ff_{std}$ be the standard blow-up foliation by $J_{std}$-spherical leaves.
  Note that if we blow down the complex structure, we obtain the split complex structure on $M_g  $ and the induced foliation is the split foliation by the spheres.

Following verbatim the argument in \cite{BL1}, we have the following Lemma on the space of foliations and transitive action, when there is only finitely many nodal fibers.

\begin{lma}
 Let $\Fol_0$ be the connected component of $\Fol$ that contains $\mF_{std}$. Then the colimit $\mA_\infty $ is weakly homotopic to $\Fol_0$.
\end{lma}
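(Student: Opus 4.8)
The plan is to build a map $p \colon \mA_\infty \to \Fol$ that assigns to each $J$ the unique singular foliation whose leaves are $J$-holomorphic, and then to prove that $p$ restricts to a weak homotopy equivalence onto $\Fol_0$. This map is well defined precisely because we are in the equal size $\frac12$ situation: by Theorem \ref{curvesexist} through every point of $M_g\#\overline{n\CC P^2}$ there passes a unique $J$-holomorphic subvariety in the fiber class $F$, and as argued in Section \ref{1/2} the exceptional classes $E_i$ and $F-E_i$ all have minimal area and can never degenerate, so each is represented by an embedded $J$-holomorphic sphere. Together these curves assemble into exactly the regular $F$-leaves and the $n$ nodal leaves of Definition \ref{singfol}, producing $p(J)\in\Fol$. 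Since $p(J)$ depends only on $J$ and not on the value of $\mu$, the assignment is compatible with the inclusions $\mA_u\subset\mA_{u'}$ and therefore descends to a continuous map on the colimit $\mA_\infty=\underset{\mu>\max\{g,n\}}{\cup}\mA_u$.

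Next I would identify the image of $p$ and analyze its fibers. Because $\mA_\infty$ is a nested union of the canonical connected components $\mA_u$, each containing $J_{std}$, it is itself connected; since $p(J_{std})=\mF_{std}$, the image lands in $\Fol_0$. To see that $p$ surjects onto $\Fol_0$ I would use the transitive $\Diff_0$-action on $\Fol_0$ (valid here because there are only finitely many nodal leaves): any foliation in $\Fol_0$ is obtained from $\mF_{std}$ by a diffeomorphism in $\Diff_0(M)$, which transports $J_{std}$ to an element of $\mA_\infty$ realizing that foliation as holomorphic. For the fibers, note that $p^{-1}(\mF)$ consists of those $J$ preserving the (singular) line field tangent to the leaves of $\mF$; away from the nodes such $J$ are sections of a bundle whose fiber is the space of linear complex structures on $\RR^4$ preserving an oriented $2$-plane together with a compatible structure on the normal direction, a convex and hence contractible space. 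Thus each fiber of $p$ should be weakly contractible.

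The remaining and decisive step is to promote $p$ to a (quasi-)fibration, so that weakly contractible fibers force a weak equivalence. Following verbatim the argument of \cite{BL1}, I would establish path- and homotopy-lifting by showing that a smoothly varying family of singular foliations in $\Fol_0$ can be covered by a smoothly varying family of leafwise complex structures extending to a family in $\mA_\infty$; the finiteness of the nodal leaves keeps the local model near each node uniformly controlled. Once $p$ is known to be a quasi-fibration onto $\Fol_0$ with weakly contractible fibers, the long exact sequence in homotopy yields $\pi_i(\mA_\infty)\cong\pi_i(\Fol_0)$ for all $i$, i.e. $p$ is the desired weak homotopy equivalence.

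I expect the fibration property to be the main obstacle, specifically the lifting of families of foliations across the nodal leaves, where the two intersecting spherical components in classes $E_i$ and $F-E_i$ must be tracked simultaneously and the tangent line field is singular. This is exactly the point at which the non-degeneracy special to the equal size $\frac12$ case and the finiteness of the nodal fibers are indispensable, and where the argument of \cite{BL1} is invoked directly rather than re-derived.
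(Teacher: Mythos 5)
Your proposal follows essentially the same route as the paper: both define the map $\mA_\infty \to \Fol_0$ sending $J$ to the singular foliation by $J$-holomorphic spheres in the classes $F$ and $E_i$, $F-E_i$, and both conclude by showing this map is a fibration with contractible fibers (the paper citing the standard arguments of McDuff--Salamon Ch.~2.5 for exactly the points you elaborate, namely the convexity of the fiber and the lifting property near the finitely many nodal leaves). Your additional remarks on surjectivity via the transitive $\Diff_0$-action and on the non-degeneration of exceptional curves in the equal size $\tfrac12$ case are consistent with the surrounding discussion in the paper.
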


\begin{proof}

Observe that there is a map $\mA_\infty \to \Fol_0$ given by taking
$J$ to the singular foliation of $M_g \# n \overline{ \CC P^2}$ by $J$-spheres in class $F$ or $F-E$.  Standard arguments
in \cite{MS17} Ch 2.5 show that this map is a
fibration with contractible fibers.   Hence it is a homotopy equivalence.\\
\end{proof}

\begin{lma}\label{tranfol}
There is a transitive action of $\Diff_0(M_g \# n \overline{ \CC P^2})$ on $\Fol_0$. 
\end{lma}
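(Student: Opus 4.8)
There is a transitive action of $\Diff_0(M_g \# n\overline{\CC P^2})$ on $\Fol_0$.

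The plan is to exhibit the action and then prove transitivity by connecting any two foliations in $\Fol_0$ through an explicit isotopy built from the path-connectedness of $\Fol_0$ together with a parametrized version of the classification of singular $S^2$-foliations. First I would define the action: a diffeomorphism $\phi \in \Diff(M_g \# n\overline{\CC P^2})$ acts on a foliation $\Ff$ by pushing forward its leaves, so $\phi \cdot \Ff$ has leaves $\{\phi(L) : L \in \Ff\}$. Since $\phi \in \Diff_0$ preserves homology, the pushed-forward smooth leaves still represent $F$ and the nodal leaves still split into the classes $E_i$ and $F-E_i$; thus $\phi \cdot \Ff \in \Fol$, and because $\Diff_0$ is connected, the action preserves the connected component $\Fol_0$. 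This gives a well-defined action of $\Diff_0$ on $\Fol_0$.

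For transitivity, the key observation is that $\Fol_0$ is path-connected by definition (it is a connected component), so it suffices to show the action is \emph{locally transitive}: that the $\Diff_0$-orbit of any $\Ff \in \Fol_0$ is open, since then each orbit is both open and closed in the connected space $\Fol_0$, forcing a single orbit. To prove openness of the orbit, I would appeal to an Ehresmann/isotopy-extension argument: given a smooth path $\Ff_s$, $s \in [0,1]$, in $\Fol_0$ starting at $\Ff_0 = \Ff$, I want to produce a smooth path $\phi_s \in \Diff_0$ with $\phi_0 = \id$ and $\phi_s \cdot \Ff_0 = \Ff_s$. The infinitesimal version of this is the statement that the derivative of the action map $\Diff_0 \to \Fol_0$, $\phi \mapsto \phi \cdot \Ff$, is surjective onto the tangent space $T_{\Ff}\Fol_0$; concretely, any infinitesimal deformation of the foliation (a vector field valued section measuring how leaves move) can be realized by an ambient vector field generating an isotopy in $\Diff_0$. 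Integrating this family of ambient vector fields along the path $\Ff_s$ produces the desired isotopy $\phi_s$, establishing that the orbit is open.

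The main obstacle will be the local surjectivity/lifting step carried out \emph{in the presence of the nodal leaves}, where the foliation fails to be a genuine fibration. Away from the singular fiber the complement foliates an $n$-punctured genus $g$ surface $Y$ smoothly, and Ehresmann's theorem applies straightforwardly; the delicate work is extending the ambient isotopy across neighborhoods of each node so that the two intersecting spherical components (in classes $E_i$ and $F-E_i$) are carried correctly and the node is preserved. I would handle this by choosing local models near each node and using that the deformation of a node in a smooth singular foliation by $S^2$'s is itself realizable by a local ambient isotopy, then patching with a partition of unity subordinate to a cover adapted to the singular leaf. Since the statement is asserted to follow ``verbatim'' from the corresponding argument in \cite{BL1}, I would invoke that reference for the technical parametrized foliation classification, emphasizing only that the bookkeeping of the homology classes $E_i$, $F - E_i$ goes through unchanged for arbitrary $n$ because the exceptional curves remain rigid (never degenerate) in the equal-size $\tfrac12$ setting, as established in Section \ref{1/2}.
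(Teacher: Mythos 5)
Your proposal is correct, and its technical core is the same as the paper's: both arguments rest on path-connectedness of $\Fol_0$, the local product structure of the foliation away from the nodes (trivial holonomy of the generic spherical leaves), local models at the nodal leaves, and a partition-of-unity patching over a cover adapted to a section. Where you differ is in how transitivity is extracted from these ingredients. You run the orbit argument: prove each orbit is open by showing the action map $\phi \mapsto \phi\cdot\Ff$ has surjective derivative (every infinitesimal deformation of a singular foliation is realized by an ambient vector field), integrate along a path $\Ff_s$ to lift it to an isotopy $\phi_s$, and conclude that orbits are open and closed in the connected space $\Fol_0$. The paper instead constructs the diffeomorphism directly: given $\mF'\in\Fol_0$, it first produces a $\phi\in\Diff_0$ matching a chosen smooth section, the singular leaves and the nodes of $\mF'$ with those of $\mF_{std}$ (using the path joining them), and then corrects over a finite cover $\{D_i\}$ of the section by local foliation-preserving diffeomorphisms so that $\phi\circ\phi_1\circ\cdots\circ\phi_N$ carries $\mF'$ to $\mF_{std}$. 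Your infinitesimal/path-lifting formulation is the cleaner packaging and makes the isotopy-extension step explicit; the paper's version is more hands-on but leaves the analogous delicate points (existence of the initial matching $\phi$, and the fact that later local corrections do not undo earlier ones) at essentially the same level of detail as yours. Both correctly identify the nodal leaves as the only place where the Ehresmann argument fails outright, and both rely on the equal-size $\tfrac12$ hypothesis only through the rigidity of the homology classes $E_i$, $F-E_i$ of the nodal components.
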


\begin{proof}

Since $S^2\setminus pt$ is compact and
 simply connected, each generic leaf of this foliation has trivial holonomy and
hence has a neighborhood that is diffeomorphic to the product $D^2\times
S^2$ is equipped with the trivial foliation with leaves $pt\times S^2$.

 Since our foliation has smoothly embedded leaves and only one nodal leaf, we can find a 2-form transverse to each leaf. Moreover, the Poincar\'e dual of such 2-form is a smooth section, not passing through the singular point $p$.

 Now let's take an arbitrary singular foliation $\mF^{'} \in \Fol_0$ and  denote the smooth section by $\Sigma^{'}.$  We'll prove that $\Diff_0(M_g \# n \overline{ \CC P^2})$ takes this foliation $(\mF^{'}, \Sigma^{'})$ to $\mF_{std}, \Sigma_{std}$ where $\Sigma_{std}$ is the smooth section (which is indeed $J_{std}$-holomorphic).

 Since $\mF'$ and $\mF_{std}$ are in the same path connected component, there is a $\phi \in  \Diff_0(M_g \# n \overline{ \CC P^2})$ sending  $\Sigma^{'}$ to $ \Sigma_{std},$ such that the singular leaf of $\mF^{'}$ goes to the singular leaf of $\mF_{std}$ while the two singular points are identified.  Now let's fix a finite covering $\{D_i, 1\leq i \leq n\}$ of $\Sigma^{'},$ such that the local foliations over $D_i$'s cover the manifold $\Sigma_g\times S^2 \# \overline{ \CC P^2}.$

 Then we use partition of unity for the covering  $\{D_i, 1\leq i \leq n\}$ of $\Sigma^{'},$ and for each local foliation, we apply a $\phi_i$ such that the foliation  $\mF^{'}$ under $\phi\circ \phi_1 \circ \cdots \circ \phi_n$ agrees with the foliation $\mF_{std}.$

 Now we have the transitive action of $\Diff_0(M_g \# n \overline{ \CC P^2})$ on $\Fol_0$.  Notice that this action of $\Diff_0(M_g \# n \overline{ \CC P^2})$ does not necessarily preserve the regular leaves. However, it must stay invariant on each of the singular leaves and preserve the singular points and the intersections of the singular fibers with the base since any diffeomorphism isotopic to the identity acts trivially on homology.
\end{proof}

  Hence  there is an orbit-stabilizer fibration  from the transitive action, where the isotropy group is described in the following definition:

\begin{dfn}\label{fibergp}
$\Dd^g_n$ consists of all elements in the identity component of the diffeomorphisms group which fit into the commutative diagram $$
\begin{array}{ccc} M_g \# n \overline{ \CC P^3}& \stackrel \phi{\to} &M_g\# n \overline{ \CC P^2}\\
\downarrow & &\downarrow\\
  M_g, \{p_1,\cdots, p_n\}, F_p& \stackrel {\phi'} {\to} & M_g, \{p_1,\cdots, p_n\}, F_p\\
\downarrow & &\downarrow\\
  (\Sigma_g, x_1, \cdots, x_n) & \stackrel {\phi''}{\to}  & (\Sigma_g,x_1, \cdots, x_n).
\end{array}
$$

\end{dfn}

Here $p_i$ is the intersection point $E_i\cap (F-E_i)$ of the singular fiber. And the first level of the downward arrow means that we contract the $E_i$ component. We abuse notation here to still denote $p_i$ for the point in $M_g$ after contracting the curve $E_i$.

On the second level, $\phi'$ is a diffeomorphism of $M_g$ keeping the points $p_i$ fixed and fixing the fiber $F_p$ passing through $p_i$ fixed as a set, and preserves other leaves in the standard foliation.

The base $\Sigma_g$ is the holomorphic curve $B_{std}$ w.r.t the standard complex structure, and the map downward is obtained by firstly blowing down the exceptional sphere and then projecting down to the base curve.

From the above definition, it is clear that the group element of $
\Dd^g_n \subset \Diff_0(M_g\# \overline{ \CC P^2})$ preserves the leaves setwise in $Fol_0$ and hence we have the orbit-stabilizer associated to Lemma \ref{tranfol} being

\begin{equation}\label{folpre}
 \Dd^g_n \to
\Diff_0(M_g \# n\overline{ \CC P^2}) \to \Fol_0.
\end{equation}

\begin{prp}\label{tlimit}
  Take $M_g \# n \overline{ \CC P^2}$ with a form in the class $(\mu,1, \frac12,\cdots,\frac12)$. Then let $\mu$ go to $\infty$.
\begin{enumerate}
\item $\Dd^g_n$ is weakly homotopic to  $G^g_{\infty,n}$.

\item The group $\Dd^g_n$ is disconnected when $g\ge 2$.

\item When $\mu\to \infty$, s.t. for $i=0,1$, $\pi_i(G^g_{u,n})=   \pi_i(G^g _{\infty,n})$ for $i \leq 2k+2g-1,$ and hence the groups $G^g_{u,n}$ are disconnected for $g \geq 2$.
\end{enumerate}

\end{prp}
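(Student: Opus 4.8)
The plan is to establish the three assertions in order: part (1) by comparing two fibrations with common total space, part (2) via the tower of Definition~\ref{fibergp} together with the Torelli group of the base, and part (3) by combining these with the stability Theorem~\ref{stabintro}.

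For part (1), I would use that $G^g_{\infty,n}$ and $\Dd^g_n$ both occur as fibers of fibrations over weakly equivalent bases with the common total space $\Diff_0(M_g\# n\overline{\CC P^2})$. On one side is the colimit fibration $G^g_{\infty,n}\to\Diff_0(M_g\# n\overline{\CC P^2})\to\mA_\infty$; on the other is the orbit--stabilizer fibration \eqref{folpre}, namely $\Dd^g_n\to\Diff_0(M_g\# n\overline{\CC P^2})\to\Fol_0$. The map $\mA_\infty\to\Fol_0$ sending $J$ to its foliation $\mF_J$ by $J$-holomorphic $F$- and $(F-E_i)$-spheres is the weak homotopy equivalence of the Lemma preceding Lemma~\ref{tranfol}. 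It intertwines the two base maps up to homotopy: the composite $\phi\mapsto\mF_{\phi_*J_0}=\phi_*\mF_{J_0}$ agrees, after using the contractibility of the space of compatible structures for a fixed form, with the defining action map $\phi\mapsto\phi_*\mF_{J_0}$. I would then assemble the resulting homotopy-commutative map of fibrations (identity on $\Diff_0(M_g\# n\overline{\CC P^2})$, the foliation equivalence on bases) and apply the five lemma to the induced morphism of long exact homotopy sequences, concluding that the fiber map $G^g_{\infty,n}\to\Dd^g_n$ is a weak homotopy equivalence.

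For part (2), the downward tower in Definition~\ref{fibergp} provides a continuous homomorphism $\Dd^g_n\to\Diff(\Sigma_g,x_1,\dots,x_n)$, $\phi\mapsto\phi''$, hence a map on components $\pi_0\Dd^g_n\to\pi_0\Diff(\Sigma_g,x_1,\dots,x_n)$ into the mapping class group of the marked base. Since every $\phi\in\Dd^g_n$ lies in $\Diff_0$ of the four-manifold, it acts trivially on $H_*(M)$; as the projection $M_g\to\Sigma_g$ (blow down, then project to $B_{std}$) induces an isomorphism $H_1(M)\cong H_1(\Sigma_g)$, the base class $\phi''$ must lie in the Torelli subgroup $\mathcal I(\Sigma_g,x_1,\dots,x_n)$. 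I would then realize a nontrivial Torelli element, which exists precisely for $g\ge 2$, by a foliation-preserving diffeomorphism $\phi\in\Dd^g_n$ that is isotopic to the identity inside $M_g\# n\overline{\CC P^2}$, following verbatim the construction of \cite{BL1}: the $S^2$-fibers (and the nontriviality of $\pi_1(SO(3))=\ZZ/2$) furnish the additional isotopies needed to deform the total lift to the identity in the four-manifold while its base projection remains non-isotopic to the identity. Because a nontrivial $[\phi'']$ forces $[\phi]\ne 1$ in $\pi_0\Dd^g_n$ (an isotopy of $\phi$ to the identity \emph{through} $\Dd^g_n$ would descend to one of $\phi''$), this exhibits the disconnectedness.

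I expect this realization step to be the main obstacle: one must produce an explicit $\phi\in\Dd^g_n$ preserving the singular foliation, the nodal points $p_i$ and the marked fibers, whose base projection is a prescribed nontrivial Torelli class, and simultaneously certify $\phi\in\Diff_0(M_g\# n\overline{\CC P^2})$; this is exactly where the extra freedom of the $S^2$-direction is used and where one must check that the marked points and the exceptional configuration can be carried along without obstruction. Finally, for part (3), by Theorem~\ref{stabintro} the groups $\pi_i(G^g_{u,n})$ along the line $(\mu,1,\tfrac12,\dots,\tfrac12)$ are independent of $\mu$ in the stable range and equal $\pi_i(G^g_{\infty,n})$ for $i\le 2k+2g-1$, in particular for $i=0,1$ once $\mu$ is large. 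Part (1) identifies $\pi_i(G^g_{\infty,n})$ with $\pi_i(\Dd^g_n)$, and part (2) gives $\pi_0(\Dd^g_n)\ne 0$ for $g\ge 2$; chaining these identifications yields $\pi_0(G^g_{u,n})\ne 0$, so the symplectomorphism groups are disconnected for $g\ge 2$, as claimed.
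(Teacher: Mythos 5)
Your parts (1) and (3) follow the paper's argument essentially verbatim: the paper compares the two fibrations over $\mA_\infty$ and $\Fol_0$ with common total space $\Diff_0(M_g\# n\overline{\CC P^2})$, uses the weak equivalence $\mA_\infty\to\Fol_0$ of the preceding lemma to identify the fibers $G^g_{\infty,n}$ and $\Dd^g_n$, and then chains this with Theorem \ref{stabintro} exactly as you do.

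The genuine gap is in part (2), at the step you yourself flag as ``the main obstacle'': you never produce the element of $\Dd^g_n$ that is trivial in $\Diff_0(M_g\# n\overline{\CC P^2})$ but nontrivial in $\pi_0\Dd^g_n$, deferring to \cite{BL1} and to an appeal to $\pi_1(SO(3))=\ZZ/2$ that plays no role in the actual construction. The paper fills this as follows: take a path $\alpha(t)\subset\Diff(\Sigma_g)$ pushing $x_1,\dots,x_n$ along a homologically nontrivial loop in $\Conf(\Sigma_g,n)$, lift to $\alpha(t)\times\id$ on $M_g$, pull back $J_{split}$ and blow up at the moving points to get a loop of complex structures $J_t$ and hence a loop of singular foliations $\mF_t$ in $\Fol_0$ with $\mF_0=\mF_{2\pi}=\mF_{std}$; by the transitivity Lemma \ref{tranfol} choose $\phi_t\in\Diff_0$ with $\phi_t\circ\mF_0=\mF_t$, and set $\phi=\phi_{2\pi}$. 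The isotopy of $\phi$ to the identity in $\Diff_0$ is then the path $\phi_t$ itself (nothing about the $S^2$-direction needs to be certified separately), and non-triviality in $\pi_0\Dd^g_n$ follows by path-lifting in the fibration \eqref{folpre}: an isotopy to the identity through $\Dd^g_n$ would descend to an isotopy of $(\Sigma_g,x_1,\dots,x_n)$ killing the point-push, contradicting the Birman exact sequence $1\to\pi_1(\Conf(\Sigma_g,n))\to\pi_0\Diff(\Sigma_g,x_1,\dots,x_n)\to\pi_0\Diff(\Sigma_g)\to1$. Relatedly, your framing via the Torelli group is slightly off target: what is needed is not the nonvanishing of the Torelli group of $\Sigma_g$ but the injectivity of the point-pushing kernel $\pi_1(\Conf(\Sigma_g,n))$ into the marked mapping class group, which is what fails for small $g$ and is the actual source of the hypothesis $g\ge 2$.
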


\begin{proof}
For statement (1), note the equation \eqref{folpre} fits into the
commutative diagram:
$$
\begin{array}{ccc}
  \Diff_0(M_g \#n \overline{ \CC P^2})  & \to  & \mA_\infty\\
\downarrow & & \downarrow\\
\Diff_0(M_g \# n \overline{ \CC P^2}) & \to & \Fol_0,
\end{array}
$$
where the upper map is given as before by the action $\phi\mapsto \phi_*(J_{{\rm std}})$.  Hence there is an induced homotopy equivalence from the homotopy fiber  $G^g_{\infty,n}$  of the top row to the fiber
$\Dd^g_n$ of the second.

To prove statement (2), first note that we have the following  fibration
  \[  \Diff(\Sigma_g, x_1,\cdots,x_n) \longrightarrow \Diff(\Sigma_g) \longrightarrow Conf(\Sigma_g,n),\]

  Where  $Conf(\Sigma_g,n)$ is the configuration of ordered n points on $\Sigma_g$.

 Taking the right portion of the long exact sequence, we have:

  \[ 1 \longrightarrow \pi_1(Conf(\Sigma_g,n)) \longrightarrow \pi_0[\Diff(\Sigma_g, x_1,\cdots,x_n)]  \longrightarrow \pi_0[\Diff( \Sigma_g)]\longrightarrow 1.\]

  Then restricting to $ \Diff_0( \Sigma_g) $, we obtain an element in the identity component of $\pi_0(\Diff(\Sigma_g))$ but not in the identity component of $\pi_0(\Diff(\Sigma_g, x_1,\cdots,x_n))$, where $x_1,\cdots,x_n$ are the points we will blow-up.

It can be constructed explicitly in the following way: choose a path $\alpha(t)\subset  \Diff(\Sigma_g), t \in [0, 2\pi]$, pushing $x_1,\cdots,x_n \in \Sigma_g$ along   homologically non-trivial loop in $Conf(\Sigma_g,n).$
 Now $\alpha(0)=id$ and $\alpha(2\pi) \in \Diff(\Sigma_g, x_1,\cdots,x_n) \cap \Diff_0( \Sigma_g) $  and note that $\alpha(2\pi) $ is  the desired element.

 Next, we lift the path $\alpha(t)$  into dimension 4. To do that, first fix $M_g$, $\Sigma_g$ and choose $J_{split}$. There is a natural family $\alpha(t)\times id \subset \Diff_0(M_g),$ which act on the leaves  in the trivial manner.  For each of $t$, we have a product complex structure on $M_g$ by pulling back  $J_{split}$ by $\alpha(t)\times id$. Recall that $p_i$'s are the preimage of $x_i$'s under the projection $M_g\to \Sigma_g$. We are going to obtain a family of complex structures by blowing up at the points $(\alpha(t) \times id )|_{p_i} \in M_g,  1\le i \le n.$   This gives us a loop of complex structures $J_t$ on $M_g \# n \overline{ \CC P^2}$  where $J_0=J_{{\rm std}}$. Note that by \cite{Zhang16},  each $J_t$   gives rise to a singular foliation $\mF_t$, as in Definition \ref{singfol}. Geometrically, $\mF_t$ is a loop in $Fol_0$ starting with the standard singular foliation $\mF_{std},$ 
  pushing $x_1,\cdots,x_n \in \Sigma_g$ along   homologically non-trivial loop in $Conf(\Sigma_g,n)$ for each time $t\in[0,2\pi]$.

By the transitivity Lemma \ref{tranfol}, we can use a path $\phi_t$ in $\Diff_0(M_g \# n\overline{ \CC P^2})$ to push $\mF_0,$ so that $\phi_t\circ \mF_0= \mF_t.$    Note that $\phi_t$ in $\Diff_0(M_g \# n \overline{ \CC P^2})$ pushes the standard foliation along this loop.

 Now we focus on the diffeomorphism $\phi_{2\pi}$.  First note that $\phi_{2\pi}$ preserves the singular foliation $\mF_{std},$ since the foliation $\mF_{2\pi} =\mF_0=\mF_{std}.$  Hence  $\phi_{2\pi} \in \mD^g_n$.   Also, the above paragraph gives an explicit isotopy of $\phi_{2\pi}$ to the identity map in  $\Diff_0(M_g \# n\overline{ \CC P^2})$, through the path $\phi_t.$

 Finally,we show that $\phi_{2\pi}$ is not isotopic to id in $\mD^g_n$. Suppose there is an isotopy to id, then by path lifting of the fibration \ref{folpre},  we would have a leaf-preserving element in $\Diff_0(M_g \# n\overline{ \CC P^2})$, so that it is isotopic to identity through a path in $\mD^g_n$. Furthermore, this path pushes the given foliation along the lifting of the loop $\mF_t, t\in[0,2\pi]$.  Now apply the diagram of definition \ref{fibergp}. We would have an isotopy that would in turn give an isotopy of $(\Sigma_g, x_1,\ldots,x_n)$, connecting the time $2\pi$ diffeomorphism to identity.  This is a contradiction against the Birman exact sequence. Hence statement (2) holds.

Statement (3) follows from the stability Theorem \ref{stab01}.
\end{proof}

\begin{rmk}
  When $g=0$, one can blow up $S^2\times S^2$ at $n$ points with equal sizes. It is shown in \cite{LLW15} that when $n\le 3$, $ G^0_{u, n}$ is connected for all $u$. When $n>3$, $ G^0_{u, n}$  (   blowup equal and 1/2 of the size of the fiber ) is a braid group of $n$ strands on spheres (cf. \cite{LLW3}).  This follows the same pattern as  $\Diff(S^2,n)$, which is the diffeomorphism group of $S^2$ fixing $n$ points. Their examples are constructive elements in $\pi_0 G^0_{u,n} $ produced using ball swapping techniques. As pointed out in Example 2.3 of \cite{LWnote}, there is a way to construct ball swappings of a ball along a non-trivial loop in $\Sigma_g$.  It is an interesting question to explore whether the construction here is indeed a ball swapping map. An initial question in this direction is whether ( using either construction) one can see if  $\pi_0 \mD^g_n$ is a braid group of n strands on $\Sigma_g$. \\
\end{rmk}

\nocite{}
\printbibliography

\end{document}